\definecolor{navyblue}{RGB}{0, 0, 128}
\numberwithin{equation}{section}
\renewcommand{\L}{\mathcal{L}}
\newcommand{\NN}{\mathbb{N}}
\newcommand{\ZZ}{\mathbb{Z}}
\newcommand{\XX}{\mathbb{X}}
\newcommand{\set}[1]{\{ #1 \}}
\newcommand{\setm}[2]{\{ #1 \mid #2 \}}
\newcommand{\lh}{\operatorname{lh}}
\newcommand{\dom}{\operatorname{dom}}
\newcommand{\iso}{\cong}
\newcommand{\embeds}{\sqsubseteq}
\newcommand{\analytic}{\boldsymbol{\Sigma}_1^1}
\newcommand{\coanalytic}{\boldsymbol{\Pi}_1^1}
\newcommand{\Aut}[1]{\mathrm{Aut}(#1)}
\newcommand{\Stab}[1]{\mathrm{Stab}(#1)}
\newcommand{\Subg}[1]{\mathrm{Subg}(#1)}
\newenvironment{enumerate-(a)}{\begin{enumerate}[label={\upshape (\alph*)}, leftmargin=2pc]}{\end{enumerate}}
\newenvironment{enumerate-(a)-r}{\begin{enumerate}[label={\upshape (\alph*)}, leftmargin=2pc,resume]}{\end{enumerate}}
\newenvironment{enumerate-(a)-5}{\begin{enumerate}[label={\upshape (\alph*)}, leftmargin=2pc,start=5]}{\end{enumerate}}
\newenvironment{enumerate-(A)}{\begin{enumerate}[label={\upshape (\Alph*)}, leftmargin=2pc]}{\end{enumerate}}
\newenvironment{enumerate-(A)-r}{\begin{enumerate}[label={\upshape (\Alph*)}, leftmargin=2pc,resume]}{\end{enumerate}}
\newenvironment{enumerate-(i)}{\begin{enumerate}[label={\upshape (\roman*)}, leftmargin=2pc]}{\end{enumerate}}
\newenvironment{enumerate-(i)-r}{\begin{enumerate}[label={\upshape (\roman*)}, leftmargin=2pc,resume]}{\end{enumerate}}
\newenvironment{enumerate-(I)}{\begin{enumerate}[label={\upshape (\Roman*)}, leftmargin=2pc]}{\end{enumerate}}
\newenvironment{enumerate-(I)-r}{\begin{enumerate}[label={\upshape (\Roman*)}, leftmargin=2pc,resume]}{\end{enumerate}}
\newenvironment{enumerate-(1)}{\begin{enumerate}[label={\upshape (\arabic*)}, leftmargin=2pc]}{\end{enumerate}}
\newenvironment{enumerate-(1)-r}{\begin{enumerate}[label={\upshape (\arabic*)}, leftmargin=2pc,resume]}{\end{enumerate}}
\newenvironment{itemizenew}{\begin{itemize}[leftmargin=2pc]}{\end{itemize}}
\newtheorem{theorem}{Theorem}[section]
\newtheorem{lemma}[theorem]{Lemma}
\newtheorem{corollary}[theorem]{Corollary}
\newtheorem{proposition}[theorem]{Proposition}
\newtheorem{question}[theorem]{Question}
\theoremstyle{definition}
\newtheorem{definition}[theorem]{Definition}
\theoremstyle{remark}
\numberwithin{equation}{section}
\begin{document}

\title[Invariant universality for quandles and fields]{Invariant universality for quandles and fields}
\date{\today}
\author[A.D.~Brooke-Taylor]{Andrew D. Brooke-Taylor}
\author[F.~Calderoni]{Filippo Calderoni}
\author[S.K.~Miller]{Sheila K. Miller}

\address{School of Mathematics, University of Leeds, Leeds, LS2 9JT --- United Kingdom}\email{a.d.brooke-taylor@leeds.ac.uk}
\address{Dipartimento di matematica \guillemotleft{Giuseppe Peano}\guillemotright, Universit\`a di Torino, Via Carlo Alberto 10, 10121 Torino --- Italy}
\email{filippo.calderoni@unito.it}
\address{ Department of Mathematics, City University of New York,
New York City College of Technology, 
300 Jay Street
Brooklyn, NY 11201 ---
USA}
\email{smiller@citytech.cuny.edu}

 \subjclass[2010]{Primary: 03E15}
 \keywords{}
\thanks{
The first author was supported during this research by 
EPSRC Early Career Fellowship EP/K035703/2, 
``Bringing set theory and algebraic topology together.''
This work was carried out while the second author was visiting Rutgers University, partially supported by the ``National Group for the Algebraic and Geometric Structures and their Applications'' (GNSAGA--INDAM). The second author would like to thank Simon Thomas for interesting discussions and pointing out \cite{FriKol}}

\begin{abstract} 
We show that the embeddability relations for countable quandles and for 
countable fields of any given characteristic other than 2 
are maximally complex in a strong sense: they are \emph{invariantly universal}.
This notion from the theory of Borel reducibility states that any
analytic quasi-order on a standard Borel space essentially appears as the 
restriction of the embeddability relation to an isomorphism-invariant Borel set.
As an intermediate step we show that the embeddability relation 
of countable quandles is a complete analytic quasi-order. 
\end{abstract}

\maketitle


\section{Introduction}

The comparison of different equivalence relations in terms of Borel reducibility
has proven to be an extremely fruitful area of research, with implications 
in diverse areas of mathematics, most notably in showing that various 
classification programmes are impossible to complete satisfactorily.
See, for example, \cite{Hjo} for an introduction to the area;
note however that all necessary preliminaries for this paper will be provided in 
Section~\ref{sec : preliminaries}.
The area was initiated by the pioneering papers of H.~Friedman and L.~Stanley and
of Harrington, Kechris and Louveau \cite{FriSta, HarKecLou}, with the
former paper in particular focused on the equivalence relation of isomorphism 
between countable structures.
Indeed the set of all structures of a given type with underlying set the natural
numbers may be endowed with the topology of a complete separable metric space,
and in this framework the results of descriptive set theory have been brought to
bear on questions about equivalence relations to great effect.

In the underlying descriptive set-theoretic machinery, there is nothing that
requires us to constrain investigation to equivalence relations, and
recently attention in this field has expanded to include quasi-orders
(reflexive and transitive 
binary relations),
beginning with the work of Louveau and Rosendal \cite{LouRos}.
A central example of a quasi-order is the embeddability relation between
countable structures of a given type.
This also fits with previous work in category theory studying the complexity
of different categories, as for example in \cite{PulTrn}. 
Indeed, there is a kind of 
``Church's thesis for real mathematics'' that states that,
assuming the objects in question are reasonably encoded as members of a 
standard Borel space, hands-on 
constructions will invariably be Borel. 
Thus, from the functors between categories
that demonstrate universality one can expect to derive Borel reductions
that respect embeddings.
For example, building on work of Prze\'zdziecki \cite{Prz14} in a category-theoretic context,
the second author~\cite{Cal18} has shown 
that, when \(\kappa\) is an uncountable cardinal satisfying certain assumptions, the embeddability relation between \(\kappa\)-sized graphs Borel reduces in a generalised sense suitable for $\kappa$ to embeddability between \(\kappa\)-sized torsion-free abelian groups.

Louveau and Rosendal \cite{LouRos}
showed that within the class of analytic quasi-orders 
(see Section~\ref{sec : preliminaries} for definitions) there
are quasi-orders that are maximal with respect to Borel reducibility ---
so called \emph{complete analytic quasi-orders}.
Louveau and Rosendal furnish a number of examples, 
including the embeddability relation between
graphs.  In fact, the restriction of the graph embeddability relation to 
connected acyclic graphs --- \emph{combinatorial trees} --- is already 
complete analytic, a fact that we will make use of below.
We prove in Section~\ref{sec : quandles} that the embeddability relation on
quandles is complete analytic.  We also observe in Section~\ref{sec : fields}
that an old result of Fried and Koll\'ar \cite{FriKol}, when expressed in these
terms, states that the embeddability relation of fields is complete analytic.

When restricting to subclasses of structures, it is reasonable to consider the
case when the subclass is closed under isomorphism.  Thus arises the notion
of \emph{invariant universality} (Definition~\ref{Definition : invariantly universal}), 
first introduced by Camerlo, Marcone and
Motto~Ros \cite{CamMarMot} building on fundamental observations of S.~Friedman and
Motto~Ros \cite{FriMot}.
Whilst invariant universality imposes significant requirements making it stronger
than complete analyticity, a general trend observed in 
\cite{CamMarMot,CalMot} is that in practice, whenever the relation of 
embeddability on some space of countable structures is a complete analytic 
quasi-order, it is moreover invariantly universal with respect to isomorphism.

In Section~\ref{sec : invuniversality} of this paper 
we give the formal definition of invariant universality, and recall a special
case of Theorem~4.2 of \cite{CamMarMot}, which will be our main tool for
proving invariant universality. 
In Section~\ref{sec : quandles}
we first show that the embedding relation on countable quandles is
a complete analytic quasi-order, and then use this fact to show that 
the relation is invariantly universal. 
We further observe that arguing similarly we obtain invariant universality of
the embedding relations of related classes of countable
structures such as kei as LD-monoids.
In Section~\ref{sec : fields} we turn to
the embedding relation on fields of a given characteristic other than 2.
In this case, the fact that the embeddability
relation is complete analytic was essentially shown by Fried and
Koll\'ar \cite{FriKol}, and arguing using their construction we are able to 
show that the relation is invariantly universal.
Our results all 
add weight to the trend mentioned above, and hint that in the search
for a natural example of a 
complete analytic quasi-order that is not invariantly universal,
it might be best to focus on relations other than embeddability.


\section{Preliminaries}\label{sec : preliminaries}

A \emph{standard Borel space} is a pair \( (X,\mathcal B) \) such that \( \mathcal B \) is the \( \sigma \)-algebra of Borel subsets of \( X \) with respect to some Polish topology on \( X \). The class of standard Borel spaces is closed under countable products, and a Borel subset of a standard Borel space is standard Borel when viewed as a subspace.
Every uncountable standard Borel space is in fact isomorphic to the \emph{Baire space}  \(\NN^{\NN}\) of
all functions from \(\NN\) to \(\NN\), with the Borel structure generated by the product topology.
We recall that this topology is generated by all sets \([s]=\setm{g\in \NN^{\NN}}{g\supseteq s}\) of end extensions of a given finite string \(s\).
We also define the set \( (\NN)^{\NN} \) as \( \setm{x\in \NN^{\NN}}{x\text{ is injective}} \), which is a closed subset of the Baire space \( \NN^{\NN} \) and therefore a Polish space with the induced topology.
Given any Polish space,  \( X \), the set \( F(X) \) of closed subsets of \( X \) is a standard Borel space when equipped with the \emph{Effros Borel structure}, namely,
the \( \sigma \)-algebra generated by the sets
\[
\setm{C\in F(X)}{C\cap U\neq\emptyset},
\]
where \( U \) is an open subset of \( X \) 
(see~\cite[Example~2.4]{Hjo} or \cite[Section~12.C]{Kec}).
A \textit{Polish group} is a topological group whose topology is Polish.
A well known example of a Polish group is \( S_{ \infty } \), the group of all bijections from \( \NN \) to \( \NN \). 
In fact, \( S_{\infty} \) is a \( G_{\delta} \) subset of the Baire space \( \NN^{\NN} \) and a topological group under the induced topology.
We define \( N_{s}\) as \([ s]\cap S_{\infty}\).
Note that the set \( \setm{N_{s}}{s \in {(\NN)}^{<\NN}}\) is a basis for \(S_{\infty}\),
where $(\NN)^{<\NN}$ denotes the set of finite sequences of 
distinct natural numbers.

A subset \(A\) of a standard Borel space \( X \) is  \emph{analytic}, or \(\analytic \),  if there is a Polish space \( Y \) and some Borel set \( B \subseteq X \times Y \) such that \( A \) is the 
projection
\[
p(B) = \setm{x \in X}{ \exists y \in Y((x,y) \in B)}.
\]
 A subset of a standard Borel space whose complement is analytic is called
 \textit{co-analytic}, or \( \coanalytic \). 
Souslin's Theorem (see~\cite[Theorem~14.11]{Kec}) states that the Borel sets of
a standard Borel space are precisely the sets that are both \(\analytic\)
and \(\coanalytic\).  

A function \(f\colon X\to Y\) between two standard Borel spaces \(X\) and \(Y\) 
is \emph{Borel} if the inverse image under \(f\) of any Borel set is Borel.
A corollary of Souslin's Theorem is that a function \(f\colon X\to Y\) between standard
Borel spaces is Borel if and only if
\(\setm{(x,f(x))\in X\times Y}{x\in X}\) is an analytic subset of $X\times Y$ (see~\cite[Theorem 14.12]{Kec}).

A \emph{quasi-order} is a reflexive and transitive binary relation. Any quasi-order \( Q \) on a set \( X \) naturally induces an equivalence relation \( E_{Q} \)
on \( X \) which is given by defining \( x \mathrel{E_Q} y \) if and only if \( x \mathrel{Q} y \) and \( y \mathrel{Q} x \).
In the cases considered in this paper, $Q$ will be the relation of embeddability
between structures, in which case $E_Q$ will be bi-embeddability,
a coarsening of the 
equivalence relation of isomorphism between structures.

A quasi-order \( Q \) on a standard Borel space \( X \) is a subset of \(X^{2}\) so we say that the quasi-order \(Q\) is
\emph{analytic} (resp. \emph{Borel}) 
if \( Q \)
is analytic (resp. a Borel) as a subset of \( X^{2} \) equipped with the product Borel structure. 
If \( Q \) is analytic (or Borel), then so is \( E_Q \).

If  \( \boldsymbol{G} \) is a Polish group and there is a Borel action \(a\) of \( \boldsymbol{G}\) on a standard Borel space \( X \), then we say that \( X \) is a \emph{standard Borel} \( \boldsymbol{G} \)\emph{-space} and
we denote by \( E_{a} \) the \emph{orbit equivalence relation} induced by that action. 
When the action is clear from the context we shall write \(E_{\boldsymbol{G}}^{X}\) instead of \(E_{a}\).
Such equivalence relations
are often called \( \boldsymbol{G} \)-\emph{equivalence relations}. Every \( \boldsymbol{G} \)-equivalence relation is analytic by definition and it is well known that all the classes of any $\boldsymbol{G}$-equivalence relation
are Borel (see~\cite[2.3.3]{BecKec}).
 The \textit{stabilizer} of a point \( x\) in \( X \) is the subgroup
\( \Stab{x} \coloneqq  \setm{g\in \boldsymbol{G}}{g\cdot x=x}, \)
where \( g \cdot x \) denotes the value of the action on the pair \( (g,x) \). We will use the fact that each stabilizer is a closed subgroup of \(\boldsymbol G\) (see \cite[9.17]{Kec}), and that
the set \( \Subg{\mathbf{G}} \) of closed subgroups of \( \mathbf{G} \) is a Borel subset of \( F(\mathbf{G}) \). Thus \( \Subg{\mathbf{G}} \) is standard Borel space with the induced Borel structure.

 In this paper we focus mainly on standard Borel spaces of countable structures.
If \( L \) is a countable (relational) language we denote by  \( X_{L} \)
the \emph{space of \( L \)-structures with domain \( \NN \)}, whose topology is the one defined by taking as basic open sets those of the form
\[
\setm{\mathcal{M}\in X_{L}}{\mathcal{M}\models R(n_{0},\dots ,n_{k-1})},\quad
\setm{\mathcal{M}\in X_{L}}{\mathcal{M}\models \lnot R(n_{0},\dots ,n_{k-1})},
\]
for any \(k\)-tuples \( (n_{0},\dots , n_{k-1})\) of natural numbers and any relation \( R \) in \( L \) of arity \(k=a(R)\). Such a space is Polish because it is homeomorphic to
\( \prod_{R \in L} 2^{\NN^{a(R)}} \). (An analogous definition can be given also for languages with function symbols, see~\cite[Section~2.5]{BecKec}.) Let \(S_{\infty} \) act on \( X_{L} \) continuously by the so-called \emph{logic action}:
 for every \(g\) in \( S_{\infty}\) and  \(\mathcal{M}\), 
\(\mathcal{N}\in X_{L}\) we set
 \(g\cdot \mathcal{M}=\mathcal{N}\) if for all \(k\)-ary relations \( R\) in \( L \) and all \(k\)-tuples of natural numbers \((n_{0},\dotsc, n_{k-1})\), we have
\[
\mathcal{N} \models R(n_{0},\dots, n_{k-1}) \iff \mathcal{M} \models R(g^{-1}(n_{0}),\dots, g^{-1}(n_{k-1})).
\]
In other words, the structure \(g\cdot \mathcal{M}\) is obtained by interpreting each relation symbol as in \(\mathcal{M}\) up to \(g\), which is a permutation of natural numbers.
Thus, for any countable \( L \), the space \( X_{L} \) is a standard Borel \( S_{\infty}\)-space; and the isomorphism relation on \(X_{L}\), usually denoted by \(\iso_{L}\), coincides with the orbit equivalence relation \( E_{S_{\infty}}^{X_{L}} \). Moreover notice that, for every \( \mathcal{M}\) in \( X_{L} \), we have equality between 
\(
\Stab{ \mathcal M }\) and the group of automorphisms of \(\mathcal{M}\), \(\Aut{ \mathcal M }
\).

Given two quasi-orders \( P \) and \( R \) on the standard Borel spaces \( X \) and \( Y \), respectively, we say that \( P \) \emph{Borel reduces} (or is \emph{Borel reducible}) to \( R \), written \( P \leq_B R \), if and only if there is a Borel function \( f \colon X\to Y\) such that for every \( x,y\) in \( X \)
\[
x \mathrel{P} y \iff f(x)\mathrel{R}f(y).
\]
Such an \( f \) is called a \emph{Borel} \emph{reduction}.
We say that \(P\) is \emph{essentially \(R\)}, denoted \( P \sim_B R \), whenever \(P\) and \( R \) are \emph{Borel bi-reducible}: that is, \( P \leq_B R \) and \( R \leq_B P \).

Louveau and Rosendal proved in \cite{LouRos} that among all \( \analytic \) quasi-orders there are \( \leq_B \)-maximum elements called \emph{complete \( \analytic \) quasi-orders}. One of the most prominent examples of such a maximum element is the quasi-order of embeddability between \emph{combinatorial trees}.
By a \emph{graph} we mean a structure for an irreflexive and symmetric binary relation symbol called the \emph{edge relation}.
A combinatorial tree is a connected acyclic graph.

Let \( X_\mathrm{Gr} \) be the space of graphs on \( \NN \).  Identifying
each graph with the characteristic function of its edge relation as above, \( X_\mathrm{Gr} \) is a closed subset of \( 2^{\mathbb{N}^2} \), and thus is a Polish space. 
We denote by \( X_{\mathrm{CT}} \) the set of combinatorial trees with vertex set \( \mathbb{N} \), and note that \(X_{\mathrm{CT}}\) is a \(G_{\delta}\) subset of \( X_\mathrm{Gr} \)
(towards this, first observe that the set of graphs with a path from $m$ to $n$
is open for all $m$ and $n$ in $\NN$). 
Hence, $X_{\mathrm{CT}}$ is a Polish space with the induced topology
(see for example \cite[Theorem~3.11]{Kec}).
For graphs \( S,T\) in \( X_\mathrm{Gr} \), we say that \( S \) \emph{embeds}, or \(S\) \emph{is embeddable} into \( T \),
\( S\embeds_\mathrm{Gr}T \), if and only if
there is a one-to-one function \( f \colon \NN \to \NN\) which realizes an isomorphism between \( S \) and \( T\upharpoonright \mathop{\mathrm{Im}}(f) \).
The quasi-order \( \embeds_\mathrm{Gr} \)  is analytic because it is the set
\[
\setm{(S,T)\in (X_\mathrm{Gr})^{2}}{\exists f\in {(\mathbb N)}^{\mathbb N}(\forall n,m\in\mathbb{N}( (n,m)\in S)\iff (f(n),f(m))\in T) )},
\]
which is a projection of a closed subset of \( {\NN}^{\NN}\times X_\mathrm{Gr}\times X_\mathrm{Gr} \).
 We denote by \( \embeds_{\mathrm{CT}} \) the restriction of the quasi-order \(\embeds_\mathrm{Gr}\) to \(X_{\mathrm{CT}}\).

\begin{theorem}[{\cite[Theorem 3.1]{LouRos}}]\label{Theorem : LouRos}
The relation \( \embeds_{\mathrm{CT}} \) of  embeddability between countable combinatorial trees is a complete \( \analytic \) quasi-order.
\end{theorem}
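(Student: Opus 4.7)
The plan is to Borel-reduce an arbitrary $\analytic$ quasi-order $Q$ on a standard Borel space (which we may take to be $\NN^{\NN}$) to $\embeds_{CT}$. Fix a tree $S \subseteq (\NN^{<\NN})^{3}$ representing $Q$ in the standard way, so that for all $x, y \in \NN^{\NN}$,
\[
x \mathrel{Q} y \iff T(x,y) \text{ is ill-founded, where } T(x,y) = \setm{w \in \NN^{<\NN}}{(x \upharpoonright \lh(w), y \upharpoonright \lh(w), w) \in S}.
\]
The goal is to construct, in a Borel way, a combinatorial tree $\mathcal{K}(x) \in X_{CT}$ from each $x$, so that embeddings $\mathcal{K}(x) \embeds_{CT} \mathcal{K}(y)$ correspond precisely to infinite branches through $T(x,y)$, hence to witnesses of $x \mathrel{Q} y$.

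I would proceed in two steps, following the Louveau--Rosendal strategy. \emph{Step 1.} For each $x$ build a countable graph $G_x$ whose underlying combinatorics is a backbone isomorphic to $\NN^{<\NN}$ (with the root adjacent to the empty sequence), decorated so that the node of depth $n$ carries a rigid ``label gadget'' encoding the pair $(x \upharpoonright n, w)$, and so that an edge between $w$ and $w^{\frown}\langle k\rangle$ is enabled precisely when $(x \upharpoonright (\lh(w)+1), y \upharpoonright (\lh(w)+1), w^{\frown}\langle k\rangle) \in S$. The decoration is chosen rigidly enough that any embedding $G_x \embeds_{Gr} G_y$ must send the root to the root, preserve depth, and respect the coding of $x$; the data of such an embedding then amounts exactly to an infinite branch $\alpha$ through $T(x,y)$. \emph{Step 2.} Convert $G_x$ to a combinatorial tree $\mathcal{K}(x)$ via the standard ``vertex-tag and edge-tag'' device: attach to a root vertex, for each vertex $v$ of $G_x$, a pendant path of a distinguished length $\ell_v$, and for each edge $\{u,v\}$ of $G_x$ attach, at a distinguished auxiliary node, a pendant subtree of distinctive shape (e.g., a star of prescribed order) marking that edge. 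Choosing the distinguishing parameters from suitably spaced integer sets makes this transformation embeddability-preserving, so that $G_x \embeds_{Gr} G_y \iff \mathcal{K}(x) \embeds_{CT} \mathcal{K}(y)$.

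The main obstacle is the rigidity argument underlying both steps: one must verify that no embedding of $\mathcal{K}(x)$ into $\mathcal{K}(y)$ can mix vertex-tags with edge-tags, collapse different tags onto each other, or send a label gadget coding one value into a gadget coding a different one. The standard device is to pick all characteristic integers (path lengths, star orders, gadget sizes) from pairwise incomparable arithmetic progressions, so that no tag admits a nontrivial embedding into any other. Once these local rigidity claims are in place, the bijective correspondence between embeddings $\mathcal{K}(x) \embeds_{CT} \mathcal{K}(y)$ and infinite branches of $T(x,y)$ becomes a direct diagram chase, and Borelness of $x \mapsto \mathcal{K}(x)$ is automatic because the entire construction is continuous level by level in $x$.
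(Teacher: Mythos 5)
There is a genuine gap, and it sits exactly where the real content of the theorem lies. Your Step~1 defines the graph $G_x$ by declaring that the edge between $w$ and $w^{\frown}\langle k\rangle$ is present ``precisely when $(x\restriction(\lh(w)+1), y\restriction(\lh(w)+1), w^{\frown}\langle k\rangle)\in S$'' --- but $G_x$ must be a function of $x$ alone, and $y$ is not available when you build it. What you have actually described is a structure attached to the \emph{pair} $(x,y)$ that records $T(x,y)$; that cannot serve as a Borel reduction, which must send each single point $x$ to a single combinatorial tree in such a way that embeddability between the images of $x$ and of $y$ recovers the pair-dependent statement ``$T(x,y)$ is ill-founded.'' Bridging this mismatch is the heart of Louveau and Rosendal's proof (the one the paper cites, and whose shape is recalled after Proposition~\ref{proposition : FriMot}): one first replaces the representing tree $S$ by a \emph{normal} tree --- normality being a closure condition extracted from the reflexivity and transitivity of $Q$ --- and then assigns to each $x$ the section tree $S_x=\setm{(t,u)}{(x\restriction \lh(t),t,u)\in S}$ on $2\times\omega$, which packages the relation of $x$ to \emph{all} potential $y$ simultaneously. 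The intermediate quasi-order $\leq_{max}$ (existence of a monotone, level-preserving $f$ with $(t,u)\in S_x\Rightarrow(t,f(u))\in S_y$) is then shown to be complete, using normality in an essential way, and only afterwards is $\leq_{max}$ coded into $\embeds_{CT}$ by gadgets. None of this is present in your sketch, and without it the claimed correspondence between embeddings and branches of $T(x,y)$ has no starting point.

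Your Step~2 --- tagging vertices and edges with rigid pendant trees of pairwise non-embeddable shapes so that graph embeddability is preserved and reflected --- is sound in spirit and close to what Louveau and Rosendal actually do when passing from trees on $2\times\omega$ to combinatorial trees (one must still check that an arbitrary embedding of combinatorial trees cannot shift the backbone or permute levels, which takes care but is routine). The missing idea is entirely in Step~1.
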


All trees built in the proof of Theorem~\ref{Theorem : LouRos} satisfy the further property that there are no complete vertices,  expressible by the formula:
\begin{equation} \label{eq : property}\tag{\(\sqcup\)}
\forall x \exists y (x \neq y\land (x,y)\notin T).
\end{equation}
We denote by \(X_{\mathrm{CT}^{\sqcup}}\) the standard Borel space of combinatorial trees satisfying \eqref{eq : property}.
In \cite[Section~2]{FriMot} and \cite[Section~3]{CamMarMot}, the authors modified the proof of Theorem~\ref{Theorem : LouRos} to  prove the following proposition.

\begin{proposition}\label{proposition : FriMot}
There is a Borel \(\XX\subseteq X_{\mathrm{CT}^{\sqcup}}\) such that:

\begin{enumerate-(i)}
\item \label{property1} the equality and isomorphism relations restricted to \( \XX\), denoted respectively by \( =_{\XX} \) and \( \iso_{\XX} \), coincide;
\item \label{property2} each graph in \( \XX \) is rigid; that is,  it has no nontrivial automorphism;
\item \label{property3} for every \(\analytic\) quasi-order \(P\) on \( 2^{\NN}\), there exists an injective Borel reduction \( \alpha\mapsto T_{\alpha} \) from \(P\) to \(\embeds_\XX\).
\end{enumerate-(i)}
\end{proposition}

This result is a strengthening of Theorem \ref{Theorem : LouRos}. 
A closer look into \cite{CamMarMot} shows that the map \(\alpha\mapsto T_{\alpha}\) in \ref{property3} of Proposition~\ref{proposition : FriMot} is constructed by first reducing \(P\) to a quasi-order, which is denote by \(\leq_\text{max}\) and is defined on the standard Borel space \( \mathcal{T}\) of normal trees\footnote{The precise definition of \(\leq_\text{max}\) is not relevant to the results of this paper. We refer the interested reader to \cite[Definition~2.3]{LouRos}.} on \( 2\times\omega \),
and then reducing \(\leq_\text{max}\) to \(\embeds_{\mathrm{CT}^{\sqcup}}\).
Both those reductions are injective. Next one defines \(\XX\) as the image of the whole of \(\mathcal{T} \) through the second map. Clearly, \(\XX\) is a Borel subset of \(X_{\mathrm{CT}^{\sqcup}}\) as it is the injective image of a standard Borel space through a Borel map~\cite[Corollary~15.2]{Kec}. Moreover, since \(\leq_\text{max}\) is known to be a complete \( \analytic \) quasi-order (see \cite[Theorem~2.5]{LouRos}),
so is the quasi-order \(\embeds_{\XX}\).
Therefore in contrast to items \ref{property1} and \ref{property2},
the bi-embeddability relation on $\XX$ will be highly nontrivial,
and the graphs in $\XX$ will have many nontrivial endomorphisms.


\section{Invariant universality}\label{sec : invuniversality}

The property of invariant universality (Definition~\ref{Definition : invariantly universal}) was first observed in \cite{CamMarMot} for embeddability between countable combinatorial trees when the equivalence relation is isomorphism.

 \begin{definition}[{\cite{CamMarMot}}]\label{Definition : invariantly universal}
Let \( P \) be a \( \analytic \) quasi-order on some standard Borel space \( X \) and let \(E\) be a \( \analytic \) equivalence subrelation of \( P \). We say that \( (P,E) \)
is \emph{invariantly universal} (or \( P \) is invariantly universal with respect to \( E \)) if for every \( \analytic \) quasi-order \( R \) there is a Borel subset \( B\subseteq X \) which is invariant with respect to \( E \) and
such that  \( P \restriction B \) is essentially \( R \).
 \end{definition}
 
When we look at relations defined on a space of countable structures, if \((P,E)\) are as in Definition~\ref{Definition : invariantly universal} and \(E\) is the relation of isomorphism, we simply say that \(P\) is invariantly universal.
By a classical result of Lopez-Escobar (see \cite[Theorem~16.8]{Kec}), a subset of a space of countable structures is closed under isomorphism if and only if it is definable in the logic \(\mathcal{L}_{\omega_{1}\omega}\). Examples of invariantly universal quasi-orders found in \cite{CamMarMot,CalMot,CamMarMot17} include: linear isometric embeddability between separable Banach spaces;
 embeddability between countable groups; and isometric embeddability on ultrametric Polish spaces with any prescribed ill-founded set of distances.
 
The standard Borel space \( \XX \) defined in Section~\ref{sec : preliminaries} is used to test whether a pair \( (Q,E) \) satisfying the hypotheses of Definition \ref{Definition : invariantly universal} is invariantly universal. The following result, which is essentially a particular case of~\cite[Theorem 4.2]{CamMarMot},
gives a sufficient condition for the invariant universality of a pair.

\begin{theorem}[{\cite{CamMarMot}}]\label{Theorem : CMMR13}
Let \( P \) be a \( \analytic \) quasi-order on a 
space \(X_L\) of \(L\)-structures with domain \(\NN\)
such that \( \iso_{L}\subseteq P \).
Suppose that the following conditions hold:
\begin{enumerate-(i)}\label{enumerate : TheoremCMMR13}
 \item \label{condition : 1} there is a Borel reduction \( f \colon \XX\rightarrow X_{L} \) of \( \embeds_\XX \) to \( P \);
\item \label{condition : 2} \( f \) is also a Borel reduction of \( {=_\XX} \) (equivalently, of \( \cong_{\XX} \)) to \( \iso_{L} \);
\item \label{condition : 3} the map
\(
 \XX\to\Subg { S_{\infty}}, T\mapsto \Stab{f(T)}=\Aut{f(T)}
\)
 is Borel.
\end{enumerate-(i)}
Then, for every \( \analytic \) quasi-order \( R \) there is a
Borel \( B\subseteq X_{L} \) such that \( R \) is essentially \( P \restriction B\).
\end{theorem}

One of the open questions about invariant universality in the paper by Camerlo, Marcone, and Motto~Ros is the following.

\begin{question}[{\cite[Question~6.3]{CamMarMot}}]\label{CMMRQ63}
Is there a natural pair \((P,E)\) which is not invariantly universal 
but for which \(P\) is a complete analytic quasi-order?
\end{question}

We stress the word ``natural'' --- although examples of such pairs are known, none of them consists of relations that arise in other contexts defined over a space of mathematical objects. 
Our results show that the specific examples of quandle embedding and of field 
embedding for fields of characteristic not equal to 2 (each with the equivalence
relation of isomorphism) do not furnish examples for an affirmative answer
to Question~\ref{CMMRQ63}.


\section{Quandles and related structures} \label{sec : quandles}
In this section we use the reduction from graphs to quandles defined in \cite{BroMil} to prove that embeddability between countable quandles is a complete \( \analytic \) quasi-order.
Recall that a set \( Q \) with a binary relation \(\ast\) is a \emph{quandle} if:
\begin{enumerate-(a)}
\item \label{ax:quandle1} \( \forall x,y,z \in Q(x\ast(y\ast z)=(x\ast y)\ast(x\ast z))\);
\item \label{ax:quandle2} \( \forall x,z\in Q \ \exists ! y\in Q (x\ast y=z)\);
\item \label{ax:quandle3} \( \forall x\in Q (x \ast x=x)\).
\end{enumerate-(a)}
For an introduction to the theory of quandles, see for example \cite{ElhNel}.

We now recall the reduction appearing in \cite{BroMil}. For any \( T \) in \( X_\mathrm{Gr} \),
let \( Q_{T} \) be the quandle with underlying set \(\NN \times \set{0,1}\) and the binary operation be \(\ast_{T} \) defined as follows:
\begin{equation}\label{eq : defast}\tag{\(\ast\)}
(u,i) \ast_{T} (v,j)=
\begin{cases}
(v,j) & \text{if \( u = v \) or \( (u,v) \in T \)},\\
(v,1-j) & \text{otherwise}.
\end{cases}
\end{equation}
It is straightforward to check that \( (Q_{T}, \ast_{T} )\) satisfies \ref{ax:quandle1}--\ref{ax:quandle3}.
In the sequel, we denote  the space of quandles with domain \( \NN \) by \( X_\mathrm{Qdl} \),
which is a \( G_{\delta} \) subset of \( 2^{\NN^{3}} \) and thus a Polish space. For every graph \(T\) in \( X_\mathrm{Gr} \), the quandle \( Q_{T} \)
can be easily coded as an isomorphic structure 
\( \mathcal{Q}_{T} \) with domain \( \NN \), for example use the bijection \( \NN\times 2 \to \NN, (n,i)\mapsto 2n+i \).
Clearly the map $T\mapsto \mathcal{Q}_T$ is Borel; in fact, it is continuous. Recall the following definition.

\begin{definition}
Suppose that there is a Borel action \(a\) of \(S_{\infty}\) on some standard Borel space and \(E=E_{a}\).
We say that \(E\) is \emph{\(S_{\infty}\)-complete} if every equivalence relation induced by a Borel action of \( S_\infty \) on
some standard Borel space Borel reduces to \(E\).
\end{definition}

The main theorem of \cite{BroMil} is the following.

\begin{theorem}[{\cite[Theorem 3]{BroMil}}]\label{Theorem : BroMil} For all graphs \( S,T \) in \( X_\mathrm{Gr} \), we have
\[
S \iso_\mathrm{Gr} T\qquad  \iff \qquad Q_{S} \iso_\mathrm{Qdl} Q_{T}.
\]
Thus, the equivalence relation of isomorphism on the space
of countable quandles is \( S_{\infty} \)-complete.
\end{theorem}

Proving that \( S \iso_\mathrm{Gr} T \) implies \( Q_{S} \iso_\mathrm{Qdl} Q_{T} \) is straightforward but the converse is considerably more involved. In the proof of  Theorem~\ref{Theorem : BroMil}, whenever
\(S\) contains complete vertices and  \( \rho \) is an isomorphism from \( Q_{S} \)
to \(Q_{T}\),
the surjectivity of \( \rho \) is used substantially to recover an isomorphism of graphs between \( S \) and \( T \). Since embeddings do not need to be surjective, we cannot prove an analog of Theorem~\ref{Theorem : BroMil} in the same way. However, if we restrict our attention to \(X_{\mathrm{CT}^{\sqcup}}\), a simpler argument allows us to prove 
Theorem~\ref{Theorem : embeddability}.

Towards this we now analyze quandle embeddings. We recall the following fact from \cite{BroMil}.

\begin{lemma}[{\cite[Lemma 1]{BroMil}}]
For every \( T\) in \( X_\mathrm{Gr} \) and every \( A\subseteq \mathbb{N}\), the function \(I_{A} \colon Q_{T} \to Q_{T} \) defined by
\[
I_{A} (v, j) =
\begin{cases}
  (v, j) & \text{ if \( v \in A \)}\\
  (v,1-j) & \text{ otherwise}
\end{cases}
\]
is an involution of \( Q_{T}\).
\end{lemma}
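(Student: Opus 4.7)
The plan is to verify two claims: first, that $I_A$ is a bijection equal to its own inverse; second, that $I_A$ is a quandle homomorphism, i.e.\ $I_A((u,i) \ast_T (v,j)) = I_A(u,i) \ast_T I_A(v,j)$ for all $(u,i), (v,j) \in Q_T$. Together these say that $I_A$ is an involutive automorphism of $Q_T$, which is the intended reading of ``involution of $Q_T$'' in this context.

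The self-inverse property follows from a case split on whether $v \in A$. If $v \in A$ then $I_A(I_A(v,j)) = I_A(v,j) = (v,j)$, and if $v \notin A$ then $I_A(I_A(v,j)) = I_A(v, 1-j) = (v, 1-(1-j)) = (v,j)$. Hence $I_A \circ I_A = \id_{Q_T}$, and in particular $I_A$ is bijective.

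For the homomorphism property, the key observation is that the operation $\ast_T$ and the map $I_A$ act essentially independently on the two factors of $\NN \times \{0,1\}$. More precisely, $(u,i) \ast_T (v,j)$ always has first coordinate $v$, and whether its second coordinate equals $j$ or $1-j$ depends only on $u$ and $v$ (through the clause ``$u = v$ or $(u,v) \in T$''), not on $i$ or $j$. Likewise, $I_A$ preserves first coordinates, and whether it flips the second coordinate depends only on the first coordinate. Consequently the outputs on both sides of the equation to be verified have first coordinate $v$, and the condition that triggers the $\ast_T$-flip is the same on both sides. A short case split on whether $v \in A$ and whether ``$u = v$ or $(u,v) \in T$'' holds then matches the second coordinates: in each of the four cases one is composing, in possibly different orders, the involution ``flip the second coordinate'' with the identity, and these trivially commute.

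There is no real obstacle beyond careful bookkeeping of the second coordinate; the structural reason the verification works is exactly that $\ast_T$ and $I_A$ commute on the $\{0,1\}$-factor because both act there as elements of the group of order two.
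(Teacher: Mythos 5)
Your proof is correct: the self-inverse check is immediate, and your observation that the $\ast_T$-flip condition depends only on the first coordinates (which $I_A$ preserves) while both flips act on the $\{0,1\}$-factor as commuting elements of the order-two group is exactly the right reason the homomorphism identity holds in all four cases. The paper itself gives no proof, citing \cite[Lemma 1]{BroMil} directly, so there is nothing to compare against; your argument is the standard direct verification one would expect there.
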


For any quandle homomorphism $\rho\colon Q_S\to Q_T$ between quandles derived from
graphs, let us denote by \( \rho_{V}(v,i) \) and \( \rho_{I}(v,i)\) the first and the second components of \( \rho(v,i) \), respectively.

\begin{lemma}\label{claim : no fixed}
For every graph \( S \) satisfying \eqref{eq : property} and graph $T$,
every vertex \( v\) of  \(S \), and every quandle homomorphism 
$\rho\colon Q_S\to Q_T$,
\[\rho_{V}(v,0)=\rho_{V}(v,1).\]
\end{lemma}
\begin{proof}
Since \( S\) satisfies \eqref{eq : property}, for every vertex \(v\) of \( S\)
there is another vertex \( v^{+} \) such that
\( v\) and \(v^{+}\) are not adjacent in \( S \). Then, by applying \(\rho\) to both sides of
\[
(v^{+},0) \ast_{S} (v,0)  =  (v,1)
\]
we get
\(
\rho(v^{+},0) \ast_{T} \rho(v,0)  = \rho(v,1),
\)
which implies that \( \rho_{V}(v,0)  = \rho_{V}(v,1)\) by definition (see \eqref{eq : defast}).
\end{proof}

With these ingredients we can present a factorisation lemma for the quandle
homomorphisms we shall be interested in.  We thank the anonymous referee for
their suggestion to streamline our results in this way.

\begin{lemma}
\label{lemma:factorization}
Let $S$ and $T$ be graphs satisfying \eqref{eq : property}.
Every embedding \( \rho\colon Q_{S}\to Q_{T} \) is obtained in the following manner: there is some graph embedding \( h\colon S \to T \) and some \( A\subseteq \NN \) such that
\[
\rho(v,j)=I_{A}(h(v),j).
\]
\end{lemma}

\begin{proof}
Assume that $S$ and $T$ are graphs satisfying \eqref{eq : property}, and that
\( \rho \colon Q_{S} \to Q_{T} \) is a quandle embedding. 
We define
\[
h \colon S\to T,\qquad v \mapsto \rho_{V}(v,0)=\rho_{V}(v,1).
\]
Lemma~\ref{claim : no fixed} shows that $h$ is well-defined.
Next we show that \(h\) is injective. The equality \(h(v)=h(w)\) implies that 
\[\rho_{V}(v,0)=\rho_{V}(v,1)=\rho_{V}(w,0)=\rho_{V}(w,1),\]
which implies in turn that \(\rho(v,0)=\rho(w,i)\) for either \(i=0\) or \(i=1\). By injectivity of \(\rho\), we get \( i=0\) and \(v=w\).
It remains to show that \( h \) is a graph embedding.
Pick any two adjacent vertices \(u\) and \(v\) in \(  S \). Notice that \(u\) and \(v\) are necessarily distinct and \( \rho(u,0)\ast_{T} \rho(v,0)=\rho(v,0) \). 
So either \(\rho_{V}(u,0)=\rho_{V}(v,0)\) or \((\rho_{V}(u,0),\rho_{V}(v,0))\in T\).
The first cannot hold by injectivity of $h$ just shown.
Thus it is the case that
\[
(h(u),h(v))=(\rho_{V}(u,0), \rho_{V}(v,0)) \in T .
\]
On the other hand, if \( (u,v)\notin S \) then
 \((v,j) \ast_{S} (u,0)=(u,1) \). By applying \(\rho\) to both terms, we get 
\( \rho(v,j) \ast_{T} \rho(u,0)=\rho(u,1) \). By Lemma~\ref{claim : no fixed}, we have that
\(\rho_{V}(u,0)\) equals \(\rho_{V}(u,1)\), so necessarily \( \rho_{I}(u,0)\neq\rho_{I}(u,1)\) because \(\rho\) is injective. Then, by definition of \(\ast_{T}\) we have
\[
(h(u),h(v))=(\rho_{V}(u,0), \rho_{V}(v,j)) \notin T.
\]
So $h$ is a graph embedding.

To complete the proof of Lemma~\ref{lemma:factorization} let
\[
A = \setm{n \in \NN}{\rho_{I}(n,i)\neq i}.
\]
By construction we obtain that \(\rho(v,j)=I_{A}(h(v),j)\) for every \((v,i)\in Q_{S}\).
\end{proof}

\begin{theorem}\label{Theorem : embeddability}
The relation \( \embeds_\mathrm{Qdl} \) of embeddability on the space of countable quandles
is a complete \( \analytic \) quasi-order.
\end{theorem}
\begin{proof}
It suffices to prove that \( \embeds_{\mathrm{CT}^{\sqcup}} \) Borel reduces to \( \embeds_\mathrm{Qdl} \). We show that the map from
\(X_{\mathrm{CT}^{\sqcup}}\) to \( X_\mathrm{Qdl}\) taking \( T\) to \( Q_{T} \) is a reduction.
Assume that \( f\colon S\to T \) is a graph embedding, then consider the function
\(\theta\colon  Q_{S}\to Q_{T}\) such that \((v,i)\mapsto (f(v),i)\).
Injectivity of \(\theta\) is immediate. Moreover,
for all \( (u,i)\) and \((v,j)\) in \( Q_{S} \),
\[
\theta((u,i)\ast_{S}(v,j))=
\theta(u,i)\ast_{T} \theta(v,j).
\]
In fact, by applying the definitions of \( \theta \) and \( \ast_{S} \), we have
\[
\theta((u,i)\ast_{S}(v,j))=
\begin{cases}
(f(v),j) & \text{if \( u = v \) or \( (u,v) \in T \)},\\
(f(v),1-j) & \text{otherwise};
\end{cases}
\]
and the first condition is equivalent to 
\( f(u) = f(v) \text{ or } (f(u),f(v)) \in T \)
because \( f \) is a graph embedding. Therefore, \( \theta \) witnesses that \( Q_{S}\) is embeddable into \( Q_{T} \).

Now the converse is a straightforward consequence of Lemma~\ref{lemma:factorization}. Whenever \( \rho \colon Q_{S} \to Q_{T} \) is a quandle embedding we recover a graph embedding \( h \colon S\to T \) such that
\(\rho(v,j)=I_{A}(h(v),j).\) Therefore, \(S\embeds_\mathrm{Gr} T\) as desired.
\end{proof}

Before proving the main result of this section we isolate a particular case of Lemma~\ref{lemma:factorization}.

\begin{lemma}\label{Lemma : auto}
Let $T$ be a graph satisfying \eqref{eq : property}.
Every \( \rho \) in  \( \Aut{Q_{T}} \) is obtained from some graph automorphism \( h \) in  \( \Aut{T} \) in the following manner: there is an \( h\) in \(\Aut{T} \) and some \( A\subseteq \NN \) such that
\[
\rho(v,j)=I_{A}(h(v),j).
\]
\end{lemma}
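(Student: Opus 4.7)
The plan is to recycle the argument already given in the proof of Theorem~\ref{Theorem : embeddability}: since every automorphism is in particular an embedding of \(Q_T\) into itself, the Claim in that proof (which only uses that \(T\) satisfies \eqref{eq : property}) immediately yields \(\rho_V(v,0) = \rho_V(v,1)\) for every vertex \(v\). I will define \(h(v) \coloneqq \rho_V(v,0) = \rho_V(v,1)\), extract the graph automorphism structure, and then read off the set \(A \subseteq \NN\) from the behaviour of \(\rho\) on second coordinates.

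\smallskip

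\textbf{Step 1 (getting \(h\)).} Apply Claim~\ref{claim : no fixed} to the embedding \(\rho\colon Q_T\to Q_T\) to conclude that \(\rho_V(v,0)=\rho_V(v,1)\) for every \(v\in\NN\), and set \(h(v) \coloneqq \rho_V(v,0)\). The proof of Theorem~\ref{Theorem : embeddability} (with \(S=T\)) then shows verbatim that \(h\) is a graph embedding of \(T\) into itself. The new ingredient is bijectivity: given \(w\in\NN\), surjectivity of \(\rho\) yields \((v,j)\) with \(\rho(v,j)=(w,0)\), whence \(h(v) = \rho_V(v,j) = w\); and injectivity of \(h\) follows as in the proof of Theorem~\ref{Theorem : embeddability}. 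Hence \(h \in \Aut{T}\).

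\smallskip

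\textbf{Step 2 (extracting \(A\)).} Since \(\rho(v,0)=(h(v),\rho_I(v,0))\) and \(\rho(v,1)=(h(v),\rho_I(v,1))\) have the same first coordinate but \(\rho\) is injective, \(\rho_I(v,0)\) and \(\rho_I(v,1)\) must be the two distinct elements of \(\{0,1\}\). Define
\[
A \coloneqq \setm{h(v)}{v\in\NN \text{ and } \rho_I(v,0)=0}.
\]
Because \(h\) is a bijection, this is equivalent to specifying, for each \(w\in\NN\), whether \(w\in A\) according to the value of \(\rho_I(h^{-1}(w),0)\).

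\smallskip

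\textbf{Step 3 (verifying the formula).} Fix \(v\in\NN\) and \(j\in\{0,1\}\). If \(h(v)\in A\), then by construction \(\rho_I(v,0)=0\), hence \(\rho_I(v,1)=1\), so \(\rho(v,j)=(h(v),j)=I_A(h(v),j)\). If \(h(v)\notin A\), then \(\rho_I(v,0)=1\) and \(\rho_I(v,1)=0\), so \(\rho(v,j)=(h(v),1-j)=I_A(h(v),j)\). Either way \(\rho(v,j)=I_A(h(v),j)\), as required.

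\smallskip

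There is no real obstacle here, because the heavy lifting was already done by Claim~\ref{claim : no fixed} in the proof of Theorem~\ref{Theorem : embeddability}. The only point that might warrant a short remark is the implicit assumption that \(T\) satisfies \eqref{eq : property}, which is needed to invoke the Claim; if the statement of Lemma~\ref{Lemma : auto} is intended for arbitrary \(T\in X_{Gr}\), one would have to handle separately vertices \(v\) for which \(\{v\}^c\) consists entirely of neighbours, but in the setting in which this lemma is applied (quandles \(Q_T\) arising from graphs in \(X_{CT^{\sqcup}}\)) property \eqref{eq : property} always holds and the argument above is complete.
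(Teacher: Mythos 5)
Your proof is correct and follows essentially the same route as the paper, whose proof consists only of the remark that $\rho$ is in particular a self-embedding of $Q_T$ inducing a surjective embedding of $T$ into itself, followed by ``argue as in the second part of the proof of Theorem~\ref{Theorem : embeddability}''; you have simply filled in the details (surjectivity of $h$ and the explicit extraction and verification of $A$) that the paper leaves implicit. Your closing caveat about property \eqref{eq : property} is apt but harmless, since the lemma is only ever applied to trees in $\XX\subseteq X_{CT^{\sqcup}}$, where that property holds.
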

\begin{proof}
Every automorphism \(\rho\) of \(Q_{T}\) is in particular a self-embedding of \(Q_{T}\).  Hence by Lemma~\ref{lemma:factorization}, there is an embedding \(h\colon T\to T\) such that \(\rho(v,j)=I_{A}(h(v),j)\). By construction \(h\) is surjective.
\end{proof}

We recall that \( \mathcal{Q}_{T} \) is defined as the quandle with domain \(\NN \) which is isomorphic to \( Q_{T} \) via the bijection
\( \NN\times 2 \to \NN\) taking \((n,i)\) to \( 2n+i \).

\begin{theorem}\label{Theorem : main}
The relation \( \embeds_\mathrm{Qdl} \) of embeddability between countable quandles is an invariantly universal  \( \analytic \) quasi-order.
\end{theorem}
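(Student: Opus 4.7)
The plan is to apply Theorem~\ref{Theorem : CMMR13} to \( P = \embeds_{Qdl} \) on \( X_{Qdl} \), using the map \( f \colon \XX \to X_{Qdl} \) sending \( T \) to (the coded version of) \( \mathcal{Q}_T \). Since isomorphic quandles are embeddable in one another, \( \iso_{Qdl} \subseteq \embeds_{Qdl} \), so the hypothesis of Theorem~\ref{Theorem : CMMR13} is satisfied, and it suffices to verify the three conditions \ref{condition : 1}--\ref{condition : 3}.

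Condition \ref{condition : 1} is immediate from Theorem~\ref{Theorem : embeddability}: the map \( T \mapsto \mathcal{Q}_T \) was already shown there to be a Borel reduction of \( \embeds_{CT^{\sqcup}} \) to \( \embeds_{Qdl} \), and restricting to the Borel set \( \XX\subseteq X_{CT^{\sqcup}} \) continues to give a Borel reduction of \( \embeds_{\XX} \) to \( \embeds_{Qdl} \). For condition \ref{condition : 2}, we use property~\ref{property1} of Proposition~\ref{proposition : FriMot}, which guarantees that \( =_{\XX} \) and \( \iso_{\XX} \) coincide on \( \XX \), together with Theorem~\ref{Theorem : BroMil}: if \( T_1, T_2 \in \XX \) then \( T_1 = T_2 \) iff \( T_1 \iso_{Gr} T_2 \) iff \( \mathcal{Q}_{T_1} \iso_{Qdl} \mathcal{Q}_{T_2} \).

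The only remaining point is condition \ref{condition : 3}, which is where the rigidity property~\ref{property2} of Proposition~\ref{proposition : FriMot} pays off. For \( T \in \XX \), the group \( \Aut{T} \) is trivial, so by Lemma~\ref{Lemma : auto} every element of \( \Aut{Q_T} \) is of the form \( (v,j) \mapsto I_A(v,j) \) for some \( A \subseteq \NN \). Transporting along the fixed bijection \( \NN \times 2 \to \NN \), \((n,i)\mapsto 2n+i\), used to pass from \( Q_T \) to \( \mathcal{Q}_T \), we see that \( \Stab{\mathcal{Q}_T} = \Aut{\mathcal{Q}_T} \) is a single fixed subgroup \( H \le S_\infty \), independent of \( T \in \XX \). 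Hence the map \( \XX \to \Subg{S_\infty} \), \( T \mapsto \Stab{f(T)} \), is a constant map and in particular Borel.

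With all three conditions verified, Theorem~\ref{Theorem : CMMR13} yields: for every analytic quasi-order \( R \) there is a Borel, \( \iso_{Qdl} \)-invariant set \( B \subseteq X_{Qdl} \) such that \( \embeds_{Qdl}\restriction B \) is essentially \( R \). This is exactly the statement that \( (\embeds_{Qdl}, \iso_{Qdl}) \) is invariantly universal, proving the theorem. The conceptual obstacle is condition~\ref{condition : 3}; it is handled cleanly here because the combination of Lemma~\ref{Lemma : auto} with the rigidity of graphs in \( \XX \) collapses the stabilizer map to a constant, so no genuine Borel computation of automorphism groups is required.
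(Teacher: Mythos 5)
Your proposal is correct and follows essentially the same route as the paper: apply Theorem~\ref{Theorem : CMMR13} to the map \(T\mapsto\mathcal{Q}_T\), getting condition \ref{condition : 1} from Theorem~\ref{Theorem : embeddability}, condition \ref{condition : 2} from Theorem~\ref{Theorem : BroMil} together with Proposition~\ref{proposition : FriMot}\ref{property1}, and condition \ref{condition : 3} from Lemma~\ref{Lemma : auto} plus rigidity, which forces \(\Aut{\mathcal{Q}_T}=\setm{I_A}{A\subseteq\NN}\) for every \(T\in\XX\). Your observation that the stabilizer map is therefore constant is exactly what the paper's computation of preimages of the basic Effros sets \(\setm{G}{G\cap N_s\neq\emptyset}\) (each preimage being \(\XX\) or \(\emptyset\)) establishes, just stated more directly.
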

\begin{proof}
By Theorem~\ref{Theorem : CMMR13} it suffices to prove that \( \embeds_\mathrm{Qdl} \) and
\( \iso_\mathrm{Qdl} \) together satisfies \ref{condition : 1}--\ref{condition : 3}.
Let \(f\) be the map from \(\XX\) to \(X_\mathrm{Qdl}\) taking \(T\) to \( \mathcal{Q}_{T}\).
By Theorem~\ref{Theorem : embeddability} \(f\) Borel reduces \( \embeds_{\XX}\) to \( \embeds_\mathrm{Qdl}\), and by Theorem~\ref{Theorem : BroMil} we know that \( \iso_{\XX} \) Borel reduces to \( \iso_\mathrm{Qdl} \) via the same map, hence \ref{condition : 1} and \ref{condition : 2} hold.

By Lemma~\ref{Lemma : auto}, whenever \( \rho\) is in \( \Aut{\mathcal{Q}_{T}} \) there exist
 some \( h\) in \(\Aut{ T} \)  and some \( A \subseteq \mathbb{N} \) such that
\( \rho(v,j) = I_{A} ( h(v),j) \). Further, since each \( T\) in \( \XX \) is rigid, we have \( h=id \) and consequently \( \rho=I_{A} \) for some \( A \subseteq \mathbb{N} \).
Thus for every \(T\) in \(\XX\),  \( g\) is an automorphism of \( \mathcal{Q}_{T} \) if and only if there is some
\(A\subseteq \mathbb{N} \) such that for \(i\in\set{0,1}\)
\[
g(2v+i)=
\begin{cases}
2v+i&v\in A\\
2v+1-i&\text{otherwise.}
\end{cases}
\]
Note in particular that this depends only on $A$, not on $T$.

To see that the \( T\mapsto \Aut{\mathcal{Q}_{T}} \) is Borel it suffices to show that the preimage of every basic open set is Borel.   For every fixed \( s\) in \( (\NN)^{<\NN} \), the preimage of
\[
\setm{G\in \Subg{S_{\infty}}}{ G\cap N_{s}\neq\emptyset}
\]
through the map \( T\mapsto \Aut{\mathcal{Q}_{T}}\)
is the set 
\[
\setm{T\in \XX}{ \Aut{\mathcal{Q}_{T}}\cap N_{s}\neq\emptyset}
=
\begin{cases}
\XX & \pbox[t]{2.5in}{if every \(n\) in \(\dom s\) is either sent to itself or,
						if not, swapped with
						its successor if $n$ is even
					and predecessor if \(n\) is odd,}\\
\emptyset & \text{otherwise},
\end{cases}
\]
which is certainly a Borel set.
\end{proof}

\begin{corollary}
For every \( \analytic \) quasi-order \( R \) there is an \( \L_{\omega_1 \omega} \)-elementary class \(B\) of countable quandles such that the embeddability relation on \(B\) is Borel bi-reducible with \( R \).
\end{corollary}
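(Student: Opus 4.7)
The plan is to read off the corollary directly from Theorem~\ref{Theorem : main} together with the Lopez-Escobar theorem already cited in the paper. First, given an analytic quasi-order $R$, I would apply Theorem~\ref{Theorem : main}: since $(\embeds_{Qdl}, \iso_{Qdl})$ is invariantly universal, unpacking Definition~\ref{Definition : invariantly universal} yields a Borel subset $B \subseteq X_{Qdl}$ which is invariant under $\iso_{Qdl}$ and such that $\embeds_{Qdl} \restriction B$ is Borel bi-reducible with $R$. This takes care of the Borel bi-reducibility clause of the corollary.

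It then remains to upgrade ``Borel and isomorphism-invariant'' to ``$\L_{\omega_1\omega}$-elementary''. For this I would invoke the version of the Lopez-Escobar theorem recalled immediately after Definition~\ref{Definition : invariantly universal}: a subset of the space of countable structures of a given signature is closed under isomorphism if and only if it is axiomatised by a sentence of $\L_{\omega_1\omega}$. Since $X_{Qdl}$ is itself a $G_\delta$ subset of $2^{\NN^3}$ cut out by the quandle axioms (which are themselves first-order), the set $B$, being Borel and $\iso_{Qdl}$-invariant inside $X_{Qdl}$, is the collection of models with domain $\NN$ of some $\L_{\omega_1\omega}$-sentence in the language of quandles. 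Hence $B$ is an $\L_{\omega_1\omega}$-elementary class of countable quandles.

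Combining the two ingredients gives the desired $B$. There is essentially no obstacle here: the only thing to be careful about is that one must interpret ``$\L_{\omega_1\omega}$-elementary class'' as referring to the models with domain $\NN$ (which is the setting of the whole paper), so that Lopez-Escobar applies without modification. Everything else is just a transcription of Theorem~\ref{Theorem : main} into the language of axiomatisable classes.
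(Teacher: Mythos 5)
Your argument is correct and is exactly the intended derivation: the paper states this corollary without proof because it follows immediately from Theorem~\ref{Theorem : main} (invariant universality of \(\embeds_{Qdl}\)) combined with the Lopez-Escobar theorem recalled after Definition~\ref{Definition : invariantly universal}, which is precisely what you do. Your added care in noting that Lopez-Escobar needs the set to be both Borel and isomorphism-invariant (not merely invariant, as the paper's informal phrasing might suggest) is a small but welcome precision.
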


In \cite{BroMil} other quandle-like structures are considered. A quandle is a \emph{kei}
if and only if it satisfies
\[
\forall x \forall y (x\ast(x\ast y)=y).
\]

It is easy to check that for every \(T\) in \( X_\mathrm{Gr}\), \( Q_{T}\) defined as in Section~\ref{sec : invuniversality} is a kei. Therefore, arguing as in Theorem~\ref{Theorem : main} one can prove the following.

\begin{theorem}
The embeddability relation between countable kei is invariantly universal. 
\end{theorem}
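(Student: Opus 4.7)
The plan is to re-run the argument of Theorem~\ref{Theorem : main} with the target restricted to the class \(X_{Kei}\) of countable kei with domain \(\NN\). First I would verify the hint already noted in the paper: for every \(T\in X_{Gr}\), the quandle \(Q_{T}\) actually satisfies the kei identity \(x\ast(x\ast y)=y\). Unpacking \eqref{eq : defast}, for any \((u,i),(v,j)\in Q_{T}\) there are two cases: if \(u=v\) or \((u,v)\in T\) then \((u,i)\ast_{T}(v,j)=(v,j)\) and a second application of \((u,i)\ast_{T}(\cdot)\) again returns \((v,j)\); otherwise \((u,i)\ast_{T}(v,j)=(v,j-1)\) with the second coordinate taken modulo \(2\), and a second application toggles \(j-1\) back to \(j\). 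Consequently the map \(f\colon \XX\to X_{Qdl}\) sending \(T\) to \(\mathcal{Q}_{T}\) lands inside the Borel subspace \(X_{Kei}\subseteq X_{Qdl}\), which is Borel because the kei axiom is universal first order.

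Second, I would appeal to Theorem~\ref{Theorem : CMMR13}, taking \(P\) to be \(\embeds_{Kei}\) and the equivalence relation to be \(\iso_{Kei}\). Conditions \ref{condition : 1} and \ref{condition : 2} follow for free: every kei embedding is by definition a quandle embedding, so Theorem~\ref{Theorem : embeddability} already tells us that \(f\) Borel reduces \(\embeds_{\XX}\) to \(\embeds_{Kei}\), while Theorem~\ref{Theorem : BroMil} gives that \(f\) also Borel reduces \(\iso_{\XX}\) to \(\iso_{Kei}\). Note that \(\embeds_{Kei}\) is indeed analytic as the restriction of the analytic relation \(\embeds_{Qdl}\) to a Borel set.

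For condition \ref{condition : 3} I would copy the argument at the end of the proof of Theorem~\ref{Theorem : main} essentially verbatim: by Lemma~\ref{Lemma : auto}, rigidity of the graphs in \(\XX\) forces \(\Aut{\mathcal{Q}_{T}}\) to coincide with \(\setm{I_{A}}{A\subseteq \NN}\), an expression which does not depend on \(T\). Hence for each \(s\in(\NN)^{<\NN}\), the preimage of \(\setm{G\in\Subg{S_{\infty}}}{G\cap N_{s}\neq\emptyset}\) under \(T\mapsto \Aut{\mathcal{Q}_{T}}\) is either all of \(\XX\) or empty according to whether \(s\) is consistent with some involution \(I_{A}\); in either case the preimage is Borel. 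I do not foresee any substantive obstacle: the verification that \(Q_{T}\) is always a kei is the only ingredient beyond what was already proved for quandles, and it is immediate from the definition of \(\ast_{T}\).
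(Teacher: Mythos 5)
Your proposal is correct and matches the paper's (very terse) proof: the paper likewise just observes that each \(Q_{T}\) satisfies the kei identity \(x\ast(x\ast y)=y\) (the second coordinate toggles mod \(2\), exactly as you compute) and then reruns the argument of Theorem~\ref{Theorem : main}, with conditions \ref{condition : 1}--\ref{condition : 3} of Theorem~\ref{Theorem : CMMR13} carrying over unchanged since kei embeddings and automorphisms are just quandle embeddings and automorphisms. Your added remarks that \(X_{Kei}\) is Borel and \(\embeds_{Kei}\) analytic are correct housekeeping that the paper leaves implicit.
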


\begin{definition} 
An \emph{LD-monoid}, or \emph{algebra satisfying \(\Sigma\)}, is a structure over the language \(\set{\ast,\circ}\) consisting of two binary operational symbols satisfying for all \(a,b,c\) the following identities
\begin{align*}
a\circ(b\circ c) & = (a\circ b)\circ c,\\
(a\circ b)\ast c & = a\ast(b\ast c),\\
a\ast(b\circ c) & = (a\ast b)\circ (a\ast c),\\
(a\ast b)\circ a & =a\circ b.
\end{align*}
\end{definition}

The terminology ``LD-monoid'' was introduced by Dehornoy, while Laver called such structures ``algebras satisfying \(\Sigma\).''
Notice that if \( (M,\circ_{M}) \) is a group and \(\ast_{M}\) is the conjugation operation on \(M\),
\begin{equation*}\label{eq : conjugation}
a\ast_{M} b = a\circ_{M} b\circ_{M} a^{-1},
\end{equation*}
then \((M, \circ^{M}, \ast^{M})\) is an LD-monoid.

In \cite[Theorem 4]{BroMil} the authors observed that the equivalence relation of isomorphism between LD-monoids is \( S_{\infty} \)-complete.

\begin{theorem}
The quasi-order of embeddability between countable LD-monoids is invariantly universal.
\end{theorem}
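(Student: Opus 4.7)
The plan is to apply Theorem~\ref{Theorem : CMMR13} once more, with $P = \embeds_{LD}$ and $E = \iso_{LD}$, following the same template as the proof of Theorem~\ref{Theorem : main}. The key ingredient is a Borel map $f\colon \XX \to X_{LD}$, $T \mapsto \mathcal{L}_T$, coming from the reduction that underlies \cite[Theorem~4]{BroMil}. One expects this construction to have the same flavour as the quandle construction~\eqref{eq : defast}, with both operations $\ast$ and $\circ$ defined in a uniformly coordinate-wise fashion from the edge relation of $T$, so that $\mathcal{L}_T$ has underlying set $\NN \times \set{0,1}$ (recoded in $\NN$).

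First I would check conditions \ref{condition : 1} and \ref{condition : 2} of Theorem~\ref{Theorem : CMMR13}. Condition~\ref{condition : 2} is immediate from \cite[Theorem~4]{BroMil}: the same map already reduces $\iso_{Gr}$ to $\iso_{LD}$, so in particular it reduces $=_\XX$ (equivalently $\iso_\XX$) to $\iso_{LD}$. For condition~\ref{condition : 1}, that $f$ also reduces $\embeds_\XX$ to $\embeds_{LD}$, I would adapt the two-way argument of Theorem~\ref{Theorem : embeddability}: a graph embedding $g\colon S \to T$ lifts to $\theta(v,i) := (g(v), i)$ on the domains of the LD-monoids, which preserves both $\ast$ and $\circ$ by the coordinate-wise definition; conversely, an LD-monoid embedding $\rho\colon \mathcal{L}_S \to \mathcal{L}_T$ is in particular an embedding of the $\ast$-reduct, from which a graph embedding $S\embeds T$ may be recovered as in the proof of Theorem~\ref{Theorem : embeddability}, using property~\eqref{eq : property} of graphs in $X_{CT^{\sqcup}}$.

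For condition~\ref{condition : 3}, one studies $\Aut{\mathcal{L}_T}$ for $T \in \XX$. Any LD-monoid automorphism of $\mathcal{L}_T$ is in particular an automorphism of the $\ast$-reduct, so by Lemma~\ref{Lemma : auto} it has the form $(v,j) \mapsto I_A(h(v), j)$ for some $h \in \Aut{T}$ and $A \subseteq \NN$. Rigidity of $T \in \XX$ forces $h = \id$, and the additional requirement of preserving $\circ$ restricts the admissible $A$ to a Borel-definable subfamily of $\pow(\NN)$. Borelness of $T \mapsto \Aut{\mathcal{L}_T}$ then follows by computing preimages of the Effros basic open sets $\setm{G \in \Subg{S_\infty}}{G \cap N_s \neq \emptyset}$ exactly as in the final paragraph of the proof of Theorem~\ref{Theorem : main}.

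The main obstacle is pinning down the explicit form of $\mathcal{L}_T$ from \cite{BroMil} well enough to confirm (i) that graph embeddings lift to LD-monoid embeddings on the nose, and (ii) which involutions $I_A$ commute with $\circ$. Provided the construction is sufficiently explicit and coordinate-wise -- which is very plausible given the close relationship between the LD-monoid identities and conjugation in groups hinted at right before the statement -- each of these verifications reduces to a direct computation, and the proof runs in parallel with that of Theorem~\ref{Theorem : main}.
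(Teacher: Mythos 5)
Your plan follows the template of Theorem~\ref{Theorem : main}, but it rests on a guess about a construction you have not pinned down, and that guess is where the real content of the proof lives. You assume that the reduction behind \cite[Theorem~4]{BroMil} is a coordinate-wise expansion of the quandle construction \eqref{eq : defast} on $\NN\times\set{0,1}$, and you defer to ``a direct computation'' both the verification that graph embeddings lift to LD-monoid embeddings and the determination of which involutions $I_A$ preserve $\circ$. Neither is routine, and the specific guess is doubtful: the LD-monoid identities tightly constrain $\ast$ (for instance $(a\ast b)\circ a=a\circ b$ forces $a\ast b=(a\circ b)\circ a^{-1}$ whenever $\circ$ admits inverses, i.e.\ $\ast$ must be conjugation in the group-based case), and there is no reason to expect the quandle operation $\ast_T$ of \eqref{eq : defast} to arise as the $\ast$-reduct of any LD-monoid structure on $\NN\times\set{0,1}$. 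So as written the argument has a genuine gap: its central object $\mathcal{L}_T$ is undefined, and every property you need of it is left unestablished.

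The paper takes a different and essentially computation-free route. It starts not from the quandle construction but from Williams' Borel reduction $T\mapsto G_T$ from $\embeds_{Gr}$ to embeddability of countable groups \cite{Wil14}, for which conditions \ref{condition : 1}--\ref{condition : 3} of Theorem~\ref{Theorem : CMMR13} were already verified on $\XX$ in \cite[Theorem~3.5]{CalMot}. It then sets $M_T=(\NN,\circ_T,\ast_T)$ with $\circ_T$ the group operation of $G_T$ and $\ast_T$ conjugation. Since a $\circ$-preserving injection between groups is automatically a group embedding and hence preserves conjugation, the embeddings and automorphisms of $M_T$ coincide with those of $G_T$, so all three conditions transfer verbatim from the group case; in particular $T\mapsto\Aut{M_T}$ is Borel because $T\mapsto\Aut{G_T}$ is. To salvage your approach you would have to reconstruct the LD-monoid reduction of \cite{BroMil} explicitly and redo the analysis of Theorem~\ref{Theorem : embeddability} and Lemma~\ref{Lemma : auto} for it; it is simpler to piggyback on a structure for which that work is already done, as the paper does.
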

\begin{proof}
In \cite{Wil14} J. Williams defines a Borel reduction 
\(h\colon X_\mathrm{Gr}\to X_\mathrm{Gp}\)
from \(\embeds_\mathrm{Gr}\) to \(\embeds_\mathrm{Gp}\).
Then in~\cite[Theorem 3.5]{CalMot} the second author and Motto~Ros observe that: 
\begin{enumerate-(a)}
\item
\label{(a)}
\(h\restriction \XX\) is a Borel reduction from \( =_{\XX}\) to \(\iso_\mathrm{Gp}\), and
\item
\label{(b)}
the map \( \XX\to \Subg{S_{\infty}}\) sending \(T\) to \( \Aut{h(T)} \) is Borel.
\end{enumerate-(a)}

For any $G=(\NN,\circ_G)$ in $X_\mathrm{Gp}$,
let \(M(G)=(\NN,\circ_{G},\ast_{G})\) be the LD-monoid over \(\mathbb{N}\) such that \( \ast_{G} \) is interpreted as the conjugation operation in \((\NN,\circ_{G})\).
That is, we define
\({\ast_{G}}\colon \NN\times \NN \to \NN\) by
\[
k\ast_{G} m =  k \circ_G m \circ_G k^{-1}.
\]
The LD-monoid \(M(G)\) is thus \(G\) with enriched structure ---
in model-theoretic terms, it is a definitional expansion of $G$.
Now we observe that this map $M\colon X_\mathrm{Gp}\to X_\mathrm{LD-m}$ to the 
space $X_\mathrm{LD-m}$ of LD-monoids is a Borel reduction, 
reducing group embeddability to the embeddability relation between LD-monoids,
and reducing group isomorphism to LD-monoid isomorphism.
Indeed, any homomorphism \(\phi\colon G\to H\) (resp. embedding) realizes a homomorphism (resp. embedding) between the corresponding \(M(G)\) and \(M(H)\) as
 \begin{align*}
 k\ast_{G}m=n\quad & \iff\quad k\circ_{G}m =  n\circ_{G}k \\
 & \iff \quad
 \phi(k)\circ_{H} \phi(m) =  \phi(n)\circ_{H}\phi(k) \\
& \iff \quad\phi(k)\ast_{H}\phi(m)=\phi(n).
 \end{align*}
Conversely, it is clear that any homomorphism (resp. embedding) from \(M(G)\) to \(M(H)\) gives a group homomorphism (resp. embedding) \(G\to H\) between the underlying group structures by simply ``forgetting'' the $\ast$ operation.

If we let \(f\colon\XX\to X_\mathrm{LD-m}\) be the composition $M\circ h$, verifying conditions \ref{condition : 1}--\ref{condition : 3} of Theorem~\ref{Theorem : CMMR13} will give the invariant universality of bi-embeddability on LD-monoids, as desired.
 Since \(h\) Borel reduces \(\embeds_\mathrm{Gr}\) to \(\embeds_\mathrm{Gp}\), it follows that \(f\) Borel reduces  \(\embeds_\mathbb{X}\) to embeddability on LD-monoids; hence \ref{condition : 1} holds. Condition~\ref{(a)} implies that \(f\) is a reduction from \(\cong_\mathbb{X}\) to isomorphism on LD-monoids; hence we get \ref{condition : 2}.  Finally, since \(\Aut{M(h(T))}=\Aut{h(T)}\), condition \ref{(b)} ensures that the map \(T\mapsto \Aut{M(h(T))}\) is Borel, which gives condition \ref{condition : 3} of Theorem~\ref{Theorem : CMMR13}.
\end{proof}

\section{Fields}\label{sec : fields}

We denote by \(X_{\mathrm{Fld},p}\) the standard Borel space of fields of fixed characteristic \(p\).
The relation of isomorphism on \(X_{\mathrm{Fld},p}\) is an \(S_{\infty}\)-complete equivalence relation for every characteristic $p$ --- see
\cite[Theorem~10]{FriSta} and \cite{Sha}. 
In this section we study the quasi-order of embeddability on \(X_{\mathrm{Fld},p}\), which we denote by \(\embeds_{\mathrm{Fld},p}\). Recall that, since any field has only trivial ideals, every field homomorphism is one-to-one, and thus the notions of embeddability and homomorphism coincide. Therefore we adopt the usual terminology from algebra that if \(f\colon F\to L\) is a homomorphism of fields we say that \(F\) is a subfield of \(L\), or that \(L\) is a field extension of \(F\).

If  \(F\) is a field and \(S\) is a set of algebraically independent elements over \( F \), we denote by \( F(S)\) the purely trascendental extension of \(F\) by \(S\).
If \(S\) is a singleton, \(\set{s}\), we write \(F(s)\) instead of \(F(\set{s})\). Following the notation of \cite{FriKol}, for any prime
\(p\),
 any field \(F\), and any  set \(S\) of algebraically independent elements over \(F\), we denote by \(F(S)(S,p)\) the smallest field extension of \(F(S)\) containing \(\setm{s(n)}{ s \in S,n< \omega}\), where
\begin{itemizenew}
\item \( s(0) = s \),
\item \( s(n + 1) \) is such that \( (s(n + 1))^{p}= s(n) \). 
\end{itemizenew}
Notice that this uniquely determines \(F(S)(S,p)\) up to isomorphism. 
We use the convention \(F(s)(s,p)=F(\set{s})(\set{s},p)\).

We now recall a construction of Fried and Koll\'ar~\cite{FriKol} that, given a combinatorial tree \( T \) of infinite cardinality, produces a field  \(K_{T}\), and furthermore this construction respects embedding.
For clarity we denote by \(V=\set{v_{0},v_{1},\dots}\) the set of vertices of the graphs in \(X_{\mathrm{CT}}\).
\begin{definition}[{\cite[Section~3]{FriKol}}]
 Fix a characteristic \(p\) equal to \(0\) or an odd prime number, 
fix \(F\) a countable field of characteristic \(p\),
and fix  an increasing sequence of odd prime numbers \( \setm{p_{n}}{n\in\NN} \) not containing \(p\).
For any \( T\) in \( X_{\mathrm{CT}} \),  we define \( K_{T} \) as the union of an increasing chain of fields
\( K_{n}(T) \).
These fields \( K_{n}(T) \) are defined recursively. First define
\[
K_{0}(T)\coloneqq F(V)(V,p_{0}) \quad\text{and}\quad  H_{0}(T)\coloneqq\setm{u+v}{(u,v)\in T}.
\]
Next suppose that \(K_{n}(T)\) and \(H_{n}(T)\) have already been defined. Fix a trascendental element \(t_{n}\) over \(K_{n}(T)\), and let
\(L_{n}\) be  the field \(K_{n}(T)(t_{n})(\set{t_{n}},p_{n+1})\).
Now we define
\(K_{n+1}(T)\) as the splitting field over \( L_{n} \) of the set of polynomials
\[
P_{n}=\setm{x^{2}-(t_{n}-a)}{a\in H_{n}(T)}.
\]
Further, we define \(H_{n+1}(T)\) to be a set containing exactly one root of each of the polynomials in \(P_{n}\).
\end{definition}

The next two lemmas summarize the essential properties of the map sending any \(T\) of \( X_{\mathrm{CT}}\) to \(K_{T}\).
They were implicitly obtained in the paper of Fried and Koll\'ar~\cite{FriKol}.

\begin{lemma}\label{Lemma : K homomorphism} If there is a graph embedding from
\(S\) to \(T\), then \( K_{S}\) is a subfield of \( K_{T}\).
\end{lemma}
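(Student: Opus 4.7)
The plan is to mirror the inductive construction of $K_{S}$ and $K_{T}$ and build an increasing chain of field embeddings $\sigma_n\colon K_n(S)\hookrightarrow K_n(T)$, with $\sigma_{n+1}$ extending $\sigma_n$, maintaining the additional inductive invariant $\sigma_n[H_n(S)]\subseteq H_n(T)$. The required embedding of $K_{S}$ into $K_{T}$ is then the union $\sigma=\bigcup_n \sigma_n$.

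For the base case, let $\varphi\colon S\hookrightarrow T$ denote the given graph embedding. I define $\sigma_0\colon K_0(S)\to K_0(T)$ as the unique field embedding that is the identity on $F$ and sends each vertex $v\in V$ (regarded as an element of $K_0(S)=F(V)(V,p_0)$) to $\varphi(v)\in V\subseteq K_0(T)$, together with the natural map on their $p_0$-th root towers. This is well-defined because $V$ is algebraically independent over $F$ inside both $K_0(S)$ and $K_0(T)$ and $\varphi$ is injective. The invariant $\sigma_0[H_0(S)]\subseteq H_0(T)$ follows at once: if $(u,v)\in S$, then $u+v\in H_0(S)$ is sent to $\varphi(u)+\varphi(v)$, which belongs to $H_0(T)$ since $(\varphi(u),\varphi(v))\in T$.

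For the inductive step, assume $\sigma_n$ has been constructed. First I extend $\sigma_n$ to an embedding of $L_n^S=K_n(S)(t_n^S)(\{t_n^S\},p_{n+1})$ into $L_n^T$ by sending $t_n^S\mapsto t_n^T$ and mapping the chosen tower of $p_{n+1}$-th roots compatibly onto the corresponding tower over $t_n^T$, which is legitimate since $t_n^S$ and $t_n^T$ are trascendental over $K_n(S)$ and $K_n(T)$ respectively. Next, for each $a\in H_n(S)$, set $a'\coloneqq\sigma_n(a)$, which belongs to $H_n(T)$ by inductive hypothesis, and extend by sending the chosen root $r_a\in H_{n+1}(S)$ of $x^2-(t_n^S-a)$ to the corresponding root $r_{a'}\in H_{n+1}(T)$ of $x^2-(t_n^T-a')$. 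This ensures the next invariant $\sigma_{n+1}[H_{n+1}(S)]\subseteq H_{n+1}(T)$.

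The crux of the argument is verifying that the assignment $r_a\mapsto r_{a'}$ genuinely extends to a field embedding of all of $K_{n+1}(S)$ and not merely a partial map. Since $\sigma_n$ is injective, distinct elements of $H_n(S)$ are sent to distinct elements of $H_n(T)$, and consequently the polynomials $t_n^T-a'$ for $a'\in\sigma_n[H_n(S)]$ are pairwise coprime nonconstant elements of $L_n^T[x]$ with no multiple factors. By part~(a) of Lemma~\ref{Lemma : trascendental element}, adjoining the square roots $r_{a'}$ successively strictly enlarges the field at every step, so that the tower over $L_n^T$ generated by $\setm{r_{a'}}{a\in H_n(S)}$ has the same layered structure as the analogous tower over $L_n^S$ generated by $\setm{r_a}{a\in H_n(S)}$. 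The assignment therefore defines a well-defined field homomorphism $K_{n+1}(S)\hookrightarrow K_{n+1}(T)$ extending $\sigma_n$, completing the induction.
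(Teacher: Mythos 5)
Your proof is correct and follows essentially the same strategy as the paper's: build $\phi=\bigcup_n\phi_n$ inductively, sending $t_n\mapsto t_n$ and $r_a\mapsto r_{\phi_n(a)}$ while maintaining $\phi_n[H_n(S)]\subseteq H_n(T)$, and use Lemma~\ref{Lemma : trascendental element} to justify the extension step. One remark: the independence that actually governs well-definedness of a homomorphism out of $K_{n+1}(S)$ is that of the \emph{source} tower --- you need the elements $t_n-a$ for $a\in H_n(S)$ to be independent modulo squares over $L_n$, so that the signs of the roots $r_a$ can be prescribed freely --- and this is where the paper applies the lemma (after rewriting each $t_n-a$ as a polynomial $t^{p_{n+1}^{\bar\imath}}-a$ over $K_n(S)$ in a suitable $p_{n+1}$-power root $t$ of $t_n$, since the lemma requires polynomials over the base field in a single transcendental); your appeal to part (a) for the target tower is true but not the operative fact, although the identical argument applied on the source side closes the point.
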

In fact, Fried and Koll\'ar~\cite{FriKol} proved inductively that if there is a graph embedding from
\(S\) to \(T\), then each \(K_{n}(S)\) is a subfield of \( K_{n}(T)\).

\begin{lemma}\label{Lemma : preserve edges}
Let \( \phi \colon K_{S} \to K_{T} \) be a homomorphism.
\begin{enumerate-(a)}
\item
For every \(n\) in \(\mathbb{N}\), \(\phi\) maps \( H_{n}(S)\) into \(H_{n}(T)\). In particular, we have \( \phi[H_{0}(S)]\subseteq H_{0}(T)\).
\item
\label{item:b}
Suppose that \(u\) is a vertex of \(S\). If \(u\) is not isolated and \((u,v)\) is an edge in \(S\), then
\(\phi(u)\) is in \( V\) and \((\phi(u),\phi(v))\) is an edge in \(T\).
\end{enumerate-(a)}
\end{lemma}

The next theorem is a consequence of the previous two lemmas.
The structure of the proof is as for \cite[Theorem~2.1]{FriKol}, 
but since we are concerned only with the embeddability relation rather than
all embeddings, we are able to include the odd characteristic case, unlike that
theorem.

\begin{theorem}\label{Theorem : field complete}
For every \(p\) equal to \(0\) or an odd prime number,
the quasi-order \(\embeds_{\mathrm{CT}}\) Borel reduces to \(\embeds_{\mathrm{Fld},p}\). Thus \(\embeds_{\mathrm{Fld},p}\) is a complete \( \analytic \) quasi-order.
\end{theorem}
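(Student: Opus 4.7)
The plan is to show that the map $T \mapsto K_T$ from $X_{CT}$ to $X_{Fld,p}$ (with a canonical coding of $K_T$ as a field on domain $\NN$) is a Borel reduction from $\embeds_{CT}$ to $\embeds_{Fld,p}$; completeness of $\embeds_{Fld,p}$ then follows from Theorem~\ref{Theorem : LouRos}. Borel-ness is built into the construction: $K_0(T)$ does not depend on $T$, $H_n(T)$ depends Borel-ly on $T$, and $K_{n+1}(T)$ arises from $K_n(T)$ and $H_n(T)$ by explicit adjunction of roots of specified polynomials, so fixing a canonical enumeration of freshly adjoined elements at each stage yields a Borel function into $X_{Fld,p}$. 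The forward implication ``graph embedding $\Rightarrow$ field embedding'' is precisely Lemma~\ref{Lemma : K homomorphism}.

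For the converse, let $\phi \colon K_S \to K_T$ be a field embedding and set $h \coloneqq \phi \restriction V$. The combinatorial trees in $X_{CT}$ are connected graphs on the infinite vertex set $\NN$, so no vertex is isolated, and Lemma~\ref{Lemma : preserve edges} applies at every $u \in V$: it gives $h(V) \subseteq V$ and shows that $(h(u), h(v)) \in T$ whenever $(u,v) \in S$. Injectivity of $h$ is inherited from $\phi$. The remaining point, which is the only piece not directly supplied by the lemmas above, is that non-adjacency must also be preserved. Suppose toward a contradiction that $u \ne v$ are non-adjacent in $S$ while $(h(u), h(v)) \in T$. Since $S$ is a connected tree, there is a path $u = u_0, u_1, \dots, u_k = v$ in $S$ with $k \geq 2$; by Lemma~\ref{Lemma : preserve edges} the images $h(u_0), \dots, h(u_k)$ form a path in $T$, and by injectivity they are pairwise distinct. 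Adjoining the edge $(h(u), h(v))$ produces a cycle of length $k+1 \geq 3$ in $T$, contradicting acyclicity. Hence $h \colon S \to T$ is a graph embedding.

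The delicate work, namely that a field embedding between the elaborate fields $K_S$ and $K_T$ must carry vertices to vertices and edges to edges, is already absorbed into Lemmas~\ref{Lemma : K_{n}}, \ref{Lemma : H_{0}}, and \ref{Lemma : preserve edges}, all adapted from Fried-Koll\'ar. What is not in their paper, since they treated only isomorphism, is the final cycle-based step above; the main obstacle is recognising that one can exploit acyclicity of the target tree to upgrade edge-preservation into a full graph embedding, which is the one place where the structural hypothesis ``$T$ is a combinatorial tree'' (rather than an arbitrary graph) is really used.
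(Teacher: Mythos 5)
Your proposal is correct and follows essentially the same route as the paper: Borelness of $T\mapsto K_T$ from the explicit construction, the forward direction via Lemma~\ref{Lemma : K homomorphism}, and the converse by restricting the field embedding to $V$, invoking Lemma~\ref{Lemma : preserve edges} for edge preservation (using that combinatorial trees have no isolated vertices), and then using connectedness plus acyclicity of the target tree to get preservation of non-adjacency. The only cosmetic difference is that you phrase the last step as a cycle-producing contradiction while the paper states it directly; the content is identical.
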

\begin{proof}
The map taking each \(T\) in \( X_{\mathrm{CT}}\) to \(K_{T}\) can be realized as a Borel map from \(X_{\mathrm{CT}}\) to \(X_{\mathrm{Fld},p}\).
If \(S\) is embeddable into \( T\), then \(K_{T}\) is a field extension of \(K_{S}\) by Lemma~\ref{Lemma : K homomorphism}.
Now suppose that \(\rho\colon K_{S}\to K_{T} \) is a homomorphism. We claim that \(f\) defined as the restriction map \(\rho\restriction V\) is a graph embedding from \(S\) to \(T\).
Since \(S\) is a combinatorial tree, it has no isolated vertices and therefore item \ref{item:b} of Lemma~\ref{Lemma : preserve edges} ensures that every edge \( (u,v)\) in \( S \) is preserved by \(f\).
For the converse, when \( u \) and \(v\) are not adjacent in \(S\), we have a sequence of vertices \(u=v_{0},\dotsc,v_{n}=v\) which is a path in \(S\), namely, such that \((v_{i},v_{i+1})\) is in \( S\), for every \(i<n\). Since \(f\) preserves edges and is one-to-one, the vertices \(f(v_{0}),\dotsc,f(v_{n})\) are all distinct and \((f(v_{i}),f(v_{i+1}))\) is an edge in \( T\), for every \(i<n\). As a result, we have that \(f(u)\) and \(f(v)\) are not adjacent in \(T\) by is acyclicity.
\end{proof}

The arguments of Fried and Koll\'ar show that for any $T$, the automorphisms of
$K_T$ are uniquely determined by their action on $V$, so we have the following.

\begin{corollary}\label{Corollary : auto}
The groups \(\Aut{K_{T}}\) and \(\Aut{T}\) are isomorphic via the map sending any automorphism \(\phi\) of \(K_{T}\) to the restriction of \(\phi\) to \(V\).
\end{corollary}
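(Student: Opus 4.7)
The plan is to verify that the restriction map $\Phi \colon \Aut{K_T} \to \Aut{T}$, $\phi \mapsto \phi \restriction V$, is a group isomorphism. For well-definedness, given $\phi \in \Aut{K_T}$, Lemma~\ref{Lemma : preserve edges} yields $\phi(v) \in V$ for every non-isolated $v \in V$, and since $T$ is an infinite combinatorial tree every vertex is non-isolated. Applying the same reasoning to $\phi^{-1}$ gives $\phi^{-1}(V) \subseteq V$, so $\phi \restriction V$ is a bijection of $V$, and Lemma~\ref{Lemma : preserve edges} applied in both directions shows that edges are preserved, hence $\phi \restriction V \in \Aut{T}$. The homomorphism property is immediate from $(\phi \circ \psi) \restriction V = (\phi \restriction V) \circ (\psi \restriction V)$.

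The main work is injectivity. Suppose $\phi \in \Aut{K_T}$ satisfies $\phi \restriction V = \id_V$; I would prove $\phi \restriction K_n(T) = \id$ by induction on $n$. For $n=0$, the prime field is fixed automatically, $V$ is fixed by hypothesis, and one checks that $K_T$ contains no non-trivial $p_0$-th roots of unity (since $p_0$ is odd and the construction only adjoins transcendentals, their odd-prime-power roots, and square roots), so each $p_0^\ell$-th root of a vertex is uniquely determined by its $p_0^\ell$-th power, and $\phi$ fixes all of $K_0(T) = F(V)(V,p_0)$. For the inductive step, the computation inside the proof of Lemma~\ref{Lemma : H_{0}} already yields $\phi(t_n) = t_n$; then Lemma~\ref{Lemma : H_{0}} together with the fact that $H_{n+1}(T)$ contains a unique root of each polynomial $x^2 - (t_n - a)$ forces $\phi(r_a) = r_a$ for every $a \in H_n(T)$, and the same absence of non-trivial roots of unity pins down each $p_{n+1}^\ell$-th root of $t_n$.

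For surjectivity, given $h \in \Aut{T}$, Lemma~\ref{Lemma : K homomorphism} applied to $h$ and $h^{-1}$ produces field homomorphisms $\phi_h, \phi_{h^{-1}} \colon K_T \to K_T$. Both compositions $\phi_h \circ \phi_{h^{-1}}$ and $\phi_{h^{-1}} \circ \phi_h$ restrict to $\id_V$ on $V$, so by the injectivity step each equals $\id_{K_T}$, and hence $\phi_h \in \Aut{K_T}$ with $\Phi(\phi_h) = h$. The main obstacle is the injectivity argument, which requires tracing through the particular features of the Fried--Koll\'ar construction (choice of odd primes $p_n$, selected set $H_n(T)$ of roots) to verify that fixing $V$ forces $\phi$ to be the identity on the entire recursive scaffolding of $K_T$.
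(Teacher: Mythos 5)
The paper offers no proof of this corollary at all: it is stated as ``immediate'' from Theorem~\ref{Theorem : field complete} and the lemmas preceding it, so there is no argument of the authors' to compare yours against line by line. Your proof is the natural fleshing-out of what they leave implicit, and its skeleton is right: well-definedness of $\Phi$ via Lemma~\ref{Lemma : preserve edges} applied to $\phi$ and $\phi^{-1}$, surjectivity via Lemma~\ref{Lemma : K homomorphism} applied to $h$ and $h^{-1}$ and then quoting injectivity to see that the two compositions are the identity, and injectivity as the one step carrying genuine content. You correctly identify that injectivity is where the work is. The only caveat worth recording is that your justification there --- ``$K_T$ contains no non-trivial $p_0$-th roots of unity'' and ``the prime field is fixed automatically, \dots\ so $\phi$ fixes all of $K_0(T)=F(V)(V,p_0)$'' --- tacitly assumes that the base field $F$ itself contains no non-trivial $p_n$-th roots of unity and admits no non-trivial automorphisms; the paper allows $F$ to be an \emph{arbitrary} countable field of characteristic $p$, for which both assumptions can fail (e.g.\ $F=\mathbb{Q}(\sqrt2\,)$ has an automorphism that would extend up the whole tower while fixing $V$ pointwise). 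This is not a gap you introduced: the paper's own Lemma~\ref{Lemma : K homomorphism} already asserts without comment that ``there is a unique way to define a homomorphism $\phi_0\colon K_0(S)\to K_0(T)$ that agrees with $f$ on $V$,'' which needs exactly the same hypotheses. So your argument is correct under the reading (surely the intended one) that $F$ is the prime field, and it supplies the derivation the paper omits.
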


Now we use Theorem~\ref{Theorem : field complete} and Corollary~\ref{Corollary : auto} to prove that \(\embeds_{\mathrm{Fld},p}\) is invariantly universal.

\begin{theorem}
For \(p\) not equal to 2,
the quasi-order \( \embeds_{\mathrm{Fld},p} \) is invariantly universal.
\end{theorem}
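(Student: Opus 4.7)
The plan is to apply Theorem~\ref{Theorem : CMMR13} to (a Borel coding of) the Fried--Koll\'ar map $T \mapsto K_T$ restricted to $\XX$. Since $K_T$ is built explicitly from $T$ through a countable chain of field extensions, it admits a Borel coding $T \mapsto \mathcal{K}_T \in X_{Fld,p}$ on the fixed domain $\NN$, analogous to the coding of quandles $Q_T \mapsto \mathcal{Q}_T$ used in Section~\ref{sec : quandles}. Let $f\colon \XX \to X_{Fld,p}$ be this Borel map; we verify conditions \ref{condition : 1}--\ref{condition : 3}.

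Condition \ref{condition : 1} (a Borel reduction of $\embeds_\XX$ to $\embeds_{Fld,p}$) is immediate from Theorem~\ref{Theorem : field complete}, because $\XX \subseteq X_{CT^{\sqcup}} \subseteq X_{CT}$ and the restriction of a Borel reduction to a Borel subset is still a Borel reduction.

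For condition \ref{condition : 2}, we must show that $\mathcal{K}_S \iso_{Fld,p} \mathcal{K}_T$ implies $S = T$ for $S, T \in \XX$ (the converse is immediate). Suppose $\rho\colon K_S \to K_T$ is a field isomorphism. Since each combinatorial tree in $\XX$ is infinite and connected, it has no isolated vertices, so Lemma~\ref{Lemma : preserve edges} applies to both $\rho$ and $\rho^{-1}$, giving $\rho(V) \subseteq V$ and $\rho^{-1}(V) \subseteq V$. Hence $\rho\restriction V \colon V \to V$ is a bijection, and as Lemma~\ref{Lemma : preserve edges} moreover shows that edges are preserved in both directions, $\rho\restriction V$ is a graph isomorphism $S \cong T$. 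Property~\ref{property1} of Proposition~\ref{proposition : FriMot} then gives $S = T$.

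For condition \ref{condition : 3}, observe that the map $T \mapsto \Aut{\mathcal{K}_T}$ is in fact constant on $\XX$. Indeed, by Corollary~\ref{Corollary : auto}, $\Aut{K_T}$ is isomorphic to $\Aut{T}$ via restriction to $V$, and every $T \in \XX$ is rigid by property~\ref{property2} of Proposition~\ref{proposition : FriMot}, so $\Aut{K_T}$ is trivial. Under the coding, $\Aut{\mathcal{K}_T}$ is the trivial subgroup $\{\id_\NN\} \subseteq S_\infty$, independently of $T$, and any constant map into $\Subg{S_\infty}$ is Borel. There is no serious obstacle here: the hard work has already been done by Fried and Koll\'ar (in showing that the field construction faithfully reflects the combinatorics of $T$, in particular that all automorphisms of $K_T$ are restrictions of graph automorphisms), and the only point requiring care is checking that condition \ref{condition : 2} in fact gives \emph{isomorphism} of graphs rather than merely bi-embedding, which is exactly where bijectivity of $\rho$ and the absence of isolated vertices in $\XX$-trees are used.
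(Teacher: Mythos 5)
Your proposal is correct and follows essentially the same route as the paper: the same map $T\mapsto K_T$ (suitably coded), condition \ref{condition : 1} from Theorem~\ref{Theorem : field complete}, condition \ref{condition : 2} by restricting a field isomorphism and its inverse to $V$ to get a graph isomorphism and then invoking $=_\XX{}={}\cong_\XX$, and condition \ref{condition : 3} from Corollary~\ref{Corollary : auto} together with rigidity of the trees in $\XX$, making $T\mapsto\Aut{\mathcal{K}_T}$ the constant map to the trivial subgroup. Your write-up is in fact slightly more explicit than the paper's about why the restriction to $V$ lands in $V$ and is bijective.
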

\begin{proof}
It suffices to check that \(\embeds_{\mathrm{Fld},p} \) and
\( \iso_{\mathrm{Fld}} \) satisfy conditions \ref{condition : 1}--\ref{condition : 3} of Theorem~\ref{Theorem : CMMR13}. Let \(f\colon\XX\to X_{\mathrm{Fld},p}\) be the map sending
\(T\) to \( K_{T} \).
Theorem~\ref{Theorem : field complete} gives \ref{condition : 1}.
To see \ref{condition : 2}, notice that if \( \phi\colon K_{S}\to K_{T} \) is an isomorphism
then \(\phi\restriction V\) is an isomorphism from \(S\) to \(T\) as \((\phi \restriction V)^{-1}=\phi^{-1}\restriction V\).
Moreover, condition \ref{condition : 3} is immediate as the map \(T\mapsto \Aut{K_{T}} \) is the constant map
\(T\mapsto \set{id}\) by Corollary~\ref{Corollary : auto}. 
\end{proof}
\begin{corollary}
For every \( \analytic \) quasi-order \( P \) there is an \( \L_{\omega_1 \omega} \)-elementary class of countable fields of characteristic \(p\) such that the embeddability relation on it is Borel bi-reducible with \( P \).
\end{corollary}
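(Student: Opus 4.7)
The plan is to derive this corollary directly from the preceding theorem together with the classical theorem of Lopez-Escobar invoked already in Section~\ref{sec : invuniversality}. Fix an analytic quasi-order \(P\) on some standard Borel space. By the theorem just proved, the pair \((\embeds_{Fld,p}, \iso_{Fld,p})\) is invariantly universal, so Definition~\ref{Definition : invariantly universal} yields a Borel subset \(B \subseteq X_{Fld,p}\) which is invariant under \(\iso_{Fld,p}\) and such that \(\embeds_{Fld,p} \restriction B\) is Borel bi-reducible with \(P\).

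Next I would invoke Lopez-Escobar's theorem (as cited in Section~\ref{sec : invuniversality}, see \cite[Theorem~16.8]{Kec}), which states that a subset of a space of countable structures in a countable language is Borel and invariant under isomorphism if and only if it is the class of models of some sentence of \(\mathcal{L}_{\omega_1\omega}\). Since the language of fields is finite (and in particular countable), applying this to the set \(B\) produces an \(\mathcal{L}_{\omega_1\omega}\)-sentence \(\varphi\) such that \(B = \setm{K \in X_{Fld,p}}{K \models \varphi}\); in other words, \(B\) is precisely an \(\mathcal{L}_{\omega_1\omega}\)-elementary class of countable fields of characteristic \(p\).

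Putting these two steps together, the restriction of \(\embeds_{Fld,p}\) to this elementary class is Borel bi-reducible with \(P\), which is what we want. No real obstacle is anticipated here: the corollary is a packaging of the invariant universality result through Lopez-Escobar's definability criterion, and the same formal argument delivers the analogous corollary stated earlier for quandles.
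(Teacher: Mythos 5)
Your proposal is correct and is exactly the argument the paper intends: the corollary is stated without proof as an immediate consequence of the invariant universality theorem combined with the Lopez--Escobar theorem already cited in Section~\ref{sec : invuniversality}, which is precisely the two-step packaging you give. The only point worth noting is that Lopez--Escobar requires the invariant set to be Borel (the paper's informal statement omits this hypothesis), and you correctly use the Borel set \(B\) supplied by Definition~\ref{Definition : invariantly universal}.
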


\begin{question}
Is the embeddability relation \(\embeds_{\mathrm{Fld},2}\) between countable fields of characteristic \(2\) an invariantly universal quasi-order?
\end{question}

\subsection*{Acknowledgements}
The first author was supported during this research by 
EPSRC Early Career Fellowship EP/K035703/2, 
``Bringing set theory and algebraic topology together.''
This work was carried out while the second author was visiting Rutgers University supported by the ``National Group for the Algebraic and Geometric Structures and their Applications'' (GNSAGA--INDAM). The second author would like to thank Simon Thomas for interesting discussions and pointing out \cite{FriKol}.
We thank the anonymous referee for their careful reading and suggestions.


\begin{thebibliography}{CamMarMot}





\normalsize
\baselineskip=17pt


\bibitem[BK96]{BecKec}
Howard Becker and Alexander~S. Kechris.
\newblock {\em The descriptive set theory of {P}olish group actions}, volume
  232 of {\em London Mathematical Society Lecture Note Series}.
\newblock Cambridge University Press, Cambridge, 1996.

\bibitem[BTM]{BroMil}
Andrew~D. Brooke-Taylor and Sheila~K. Miller.
\newblock The quandary of quandles: the Borel completeness of a knot invariant.
\newblock {\em J. Austral. Math. Soc.}, 108(2):262--277, 2020.

\bibitem[Cal18]{Cal18}
Filippo Calderoni.
\newblock The complexity of the embeddability between torsion-free abelian
  groups of uncountable size.
\newblock {\em J. Symb. Log.}, 83(2):703--716, 2018.

\bibitem[CMR18]{CalMot}
Filippo Calderoni and Luca Motto~Ros.
\newblock Universality of group embeddability.
\newblock {\em Proc. Amer. Math. Soc.}, 146(4):1765--1780, 2018.


\bibitem[CMMR13]{CamMarMot}
Riccardo Camerlo, Alberto Marcone, and Luca Motto~Ros.
\newblock Invariantly universal analytic quasi-orders.
\newblock {\em Trans. Amer. Math. Soc.}, 365(4):1901--1931, 2013.

\bibitem[CMMR18]{CamMarMot17}
Riccardo Camerlo, Alberto Marcone, and Luca Motto~Ros.
\newblock On isometry and isometric embeddability between metric and
  ultrametric polish spaces.
\newblock {\em Adv. Math.}, 329:1231--1284, 2018.

\bibitem[EN15]{ElhNel}
Mohamed Elhamdadi and Sam Nelson.
\newblock {\em Quandles: an introduction to the algebra of knots}, volume~74 of
  {\em Student Mathematical Library}.
\newblock AMS, Providence, 2015.

\bibitem[FK82]{FriKol}
E.~Fried and J.~Koll\'ar.
\newblock Automorphism groups of fields.
\newblock In {\em Universal algebra ({E}sztergom, 1977)}, volume~29 of {\em
  Colloq. Math. Soc. J\'anos Bolyai}, pages 293--303. North-Holland,
  Amsterdam-New York, 1982.
  
\bibitem[FS89]{FriSta}
Harvey Friedman and Lee Stanley.
\newblock A {B}orel reducibility theory for classes of countable structures.
\newblock {\em J. Symbolic Logic}, 54(3):894--914, 1989.

\bibitem[FMR11]{FriMot}
Sy-David Friedman and Luca Motto~Ros.
\newblock Analytic equivalence relations and bi-embeddability.
\newblock {\em J. Symbolic Logic}, 76(1):243--266, 2011.

\bibitem[Hjo00]{Hjo}
Greg Hjorth.
\newblock {\em Classification and Orbit Equivalence Relations}, volume~75 of
  {\em Mathematical Surveys and Monographs}.
\newblock American Mathematical Society, Providence, RI, 2000.

\bibitem[HKL90]{HarKecLou}
L.~A. Harrington, A.~S. Kechris, and A.~Louveau.
\newblock A {G}limm-{E}ffros dichotomy for {B}orel equivalence relations.
\newblock {\em J. Amer. Math. Soc.}, 3(4):903--928, 1990.

\bibitem[Kec95]{Kec}
Alexander~S. Kechris.
\newblock {\em Classical descriptive set theory}, volume 156 of {\em Graduate
  Texts in Mathematics}.
\newblock Springer-Verlag, New York, 1995.

\bibitem[LR05]{LouRos}
Alain Louveau and Christian Rosendal.
\newblock Complete analytic equivalence relations.
\newblock {\em Trans. Amer. Math. Soc.}, 357(12):4839--4866 (electronic), 2005.

\bibitem[Prz14]{Prz14}
Adam~J. Prze{\'z}dziecki.
\newblock An almost full embedding of the category of graphs into the category
  of abelian groups.
\newblock {\em Adv. Math.}, 257:527--545, 2014.

\bibitem[PT80]{PulTrn}
Ale\v{s} Pultr and V\v{e}ra Trnkov\'a.
\newblock {\em Combinatorial, Algebraic and Topological Representations of
  Groups, Semigroups and Categories}.
\newblock Number~22 in North-Holland Mathematical Library. North-Holland, 1980.

\bibitem[Sha90]{Sha}
Daniel B. Shapiro.
\newblock Composites of Algebraically Closed Fields.
\newblock {\em J. Algebra}, 130:176--190, 1990.

\bibitem[Wil14]{Wil14}
Jay Williams.
\newblock Universal countable {B}orel quasi-orders.
\newblock {\em J. Symb. Log.}, 79(3):928--954, 2014.

\end{thebibliography}
\end{document}